\let\@fnsymbol\@arabic
\theoremstyle{plain}
\newtheorem{theorem}{Theorem}[section]
\newtheorem{proposition}[theorem]{Proposition}
\newtheorem{corollary}[theorem]{Corollary}
\newtheorem{lemma}[theorem]{Lemma}
\newtheorem{remark}[theorem]{Remark}
\newtheorem{assumption}[theorem]{Assumption}
 \def\nuu{u}
\def\m{\mathfrak{m}}
\def\0{\boldsymbol{0}}
\def\K{\mathcal{K}}
\def\V{\mathcal{V}}
\DeclareMathOperator*{\Ass}{Ass}
\DeclareMathOperator*{\reg}{reg}
\DeclareMathOperator*{\LCM}{LCM}
\def\NN{\mathbb{N}}
\def\ZZ{\mathbb{Z}}
\DeclareMathOperator{\Sym}{Sym}
\DeclareMathOperator{\rk}{rk}
\DeclareMathOperator{\HS}{HS}
\DeclareMathOperator{\PM}{P}
\DeclareMathOperator{\pd}{projdim}
\long\def\answer#1{}
\long\def\comment#1{}
\title{Resolution of ideals associated to  subspace arrangements}
\author{Aldo Conca}
\address{Dipartimento di Matematica,  Universit\`a di Genova, Via Dodecaneso 35, 16146 Genova, Italy}
\email{conca@dima.unige.it}
\author{Manolis C. Tsakiris}
\address{School of Information Science and Technology, ShanghaiTech University, No.393 Huaxia Middle Road, Pudong Area, Shanghai, China}
\address{Dipartimento di Matematica,  Universit\`a di Genova, Via Dodecaneso 35, 16146 Genova, Italy}
\email{mtsakiris@shanghaitech.edu.cn}
\begin{document}

\begin{abstract}
Let $I_1,\dots,I_n$ be ideals generated by linear forms in a polynomial ring over an infinite field and let $J = I_1 \cdots I_n$.
We describe a  minimal free resolution of $J$ and show that it is supported on a polymatroid  obtained from the underlying representable polymatroid by means of the so-called Dilworth truncation.   
Formulas for the projective dimension and Betti numbers are given in terms of the polymatroid  as well as a characterization of the associated primes. Along the way we show that $J$ has linear quotients. In fact, we do this for a large class of ideals $J_P$, where $P$ is a certain poset ideal associated to the underlying subspace arrangement. 
\end{abstract}

\maketitle

\section*{Introduction} 
A subspace arrangement $\V$ is a finite collection $V_1,\dots, V_n$ of vector subspaces of a given vector space $V$ over a field $K$.  Several  geometric objects can be associated to $\V$  and their investigation has   attracted the attention of many  researchers, see for example  Bj\"orner \cite{B}, De Concini and Procesi \cite{DP} and Bj\"orner, Peeva and Sidman \cite{BPS}.   Subspace arrangements interplay as well with multigraded commutative algebra and geometric computer vision, see \cite{AST, C, CS, CDG,Li}, where a subspace arrangement $\V$ gives rise to  a multigraded $K$-algebra, called the multiview algebra. Another application of subspace arrangements is in the machine learning problem of generalized principal component analysis \cite{VMS} also known as subspace clustering \cite{V}, \cite{TV}. 

In this paper we consider the product $J$ of the ideals $I_i$ generated by the $V_i$'s  in the polynomial ring $S=\Sym_K(V)$.  In \cite{CH} a primary decomposition of $J$ is presented. It is indeed a ``combinatorial" decomposition since the ideals involved are powers of ideals generated by  sums of the $V_i$'s.  From  the primary decomposition one reads immediately that $J$ is saturated from degree $n$. This is the key ingredient of the proof in \cite{CH} asserting the minimal free resolution of $J$ is linear, i.e. the Castelnuovo-Mumford regularity of $J$ is  exactly $n$.  In \cite{D} Derksen proved that the Hilbert function of $J$ is a  combinatorial invariant, that is, it    just depends  of the rank function:
$$\rk_\V: 2^{[n]}\to \NN,  \qquad    A\subseteq [n],   \qquad   \rk_\V(A)=\dim_K \sum_{i\in A} V_i.$$
As observed by Derksen, since the resolution is  linear, this implies that  the algebraic Betti numbers of $J$ are themselves  combinatorial invariants. 
Attached to the rank function we have a discrete polymatroid
$$\PM(\V)=\{ x\in \NN^n : \sum_{i\in A} x_i \leq \rk_\V(A) \mbox{ for all } A\subseteq [n] \}$$
that plays a role in the sequel. 

The goal of the paper is to describe the minimal free resolution and the associated primes of $J$ and  give an explicit formula for the Betti numbers and the projective dimension.  Indeed we prove that the minimal free resolution of $J$ can be realized as a subcomplex of the tensor product of the  Koszul complexes associated with generic generators of the $V_i$. Such a resolution is  supported on the subpolymatroid 
$$\PM(\V)^*=\left\{ x\in \NN^n : \sum_{i\in A} x_i \leq \rk_\V(A)-1 \mbox{ for all } \emptyset\neq A\subseteq [n] \right\}$$
of $\PM(\V)$  whose rank function $\rk_\V^*$ is obtained by  the so-called Dilworth truncation, i.e. 

$$\rk_\V^*(A)= \min\left\{ \sum_{i=1}^p \rk_\V(A_i)-p : A_1,\dots, A_p \mbox{  is a partition of } A\right\}.$$

  It turns out that the (algebraic) Betti numbers $\beta_i(J)$ of $J$ are given by: 

$$\sum_{i\geq 0} \beta_i(J)z^i=\sum_{i\geq 0} 
\gamma_i(\V) (1+z)^i$$ 
where $\gamma_i(\V)  = \# \{ x\in \PM(\V)^*: |x|=i\}$,  and that the projective dimension of $J$ is given by the formula: 
$$\pd J=\rk_\V^*([n])=\min\left \{  \sum_{i=1}^p  \rk_\V(A_i)-p : A_1,\dots, A_p \mbox{ is a partition of } [n] \right \}.$$

Furthermore the associated primes of $J$  are exactly  the ideals  of the form   $\sum_{i \in A} I_i$  where  $A\subseteq  [n]$ such that and $\rk_\V^*(A)=\rk_\V(A)-1$.

As a corollary, we obtain that 
$$\pd J=\pd J^\nuu  \mbox{ and } \Ass(S/J)=\Ass(S/J^\nuu)$$
 hold for every $\nuu>0$. 
 
The formulas for the Betti numbers and the projective dimension and the description of the associated primes hold over any base field, while the  description of the minimal free resolution depends on the  choice of generic bases  (in a precise sense, see \ref{assum}) of the $V_i$'s whose existence is guaranteed  over  an infinite base field.

Our results apply indeed to an entire family of ideals associated with the subspace arrangement that  makes possible inductive arguments. As a by-product we prove that the ideal $J$ has linear quotients. 
 
We thank Prof.~F.~Ardila, Prof.~A.~Fink, and Prof.~S.~Fujishige for useful discussions concerning polymatroids. 

\section{Notation and basic facts} 
Let $K$ be an infinite   field and $V$ a $K$-vector space of dimension $d$. Let $S$ be the symmetric algebra of $V$, i.e. a polynomial ring over $K$ of dimension $d$. Let $\V=V_1,\dots, V_n$ be a collection of non-zero $K$-subspaces of $V$. Let  $d_i=\dim_K V_i$.   
Such a collection $\V$  is called a subspace arrangement of dimension $(d_1,\dots, d_n)$. For $i\in [n]$ let $\{ f_{ij} : j\in [d_i] \}$ be an ordered $K$-basis of $V_i$. The arrangement of vectors 
$$\left\{ f_{ij} : i\in [n] \mbox{ and }   j\in  [d_i] \right\}$$
is called a collection of bases of $\V$. 
 Here and in the following for  $u \in \mathbb{N} $ we denote by $[u]$ the set $\{1,\dots, u\}$.  As usual for $i \in [n]$ we will denote by $e_i \in \mathbb{N}^n$ the vector with zeros everywhere except a $1$ at position $i$ and for $a \in  \NN^n$ we set  $|a|=a_1+\cdots+a_n$.  

For every $a=(a_1,\dots, a_n)  \in \NN ^n$ with $a_i\leq d_i$ we define a $K$-subspace of $V$ by 
$$W_a=\langle f_{ij} : i \in [n] \mbox{ and }  j  \in  [a_i] \rangle,$$
which clearly depends on the subspace  arrangement but also on the collection of bases  chosen. 
 
\begin{assumption} 
\label{assum}
Given  $\V=V_1,\dots, V_n$  we assume  that the collection of bases $\{ f_{ij} \}$ is  general in the sense that for all $a=(a_1,\dots, a_n)  \in \NN^n$ with $a_i\leq d_i$ the dimension of $W_a$ is the largest possible.  
\end{assumption} 

 
A collection of bases satisfying \ref{assum} always exists (here we use the fact that the base field is infinite).  In other words, the subspace arrangement  can be special with inclusions and even equalities allowed,  but for each $V_i$ we pick a general basis.  

For later purposes we define two discrete polymatroids associated to the subspace arrangement 
$\V=V_1,\dots, V_n$.  For general facts and terminology on polymatroids we refer the reader to the classical paper by Edmonds \cite{E} and to monographs  \cite{F} and \cite{M} for  modern accounts. 
The subspace arrangement $\V$  gives rise to  the rank function  $\rk_\V: 2^{[n]}\to \NN$ defined by 
$$\rk_\V(A)=\dim_K \sum_{i\in A} V_i$$
 and the associated discrete polymatroid: 

$$\PM(\V)=\left\{ x\in \NN^n : \sum_{i\in A} x_i\leq \rk_\V(A) \mbox{ for all } A \subseteq [n]\right\}.$$

Let us set 
$$\PM(\V)^*=\left\{ x\in \NN^n : \sum_{i\in A} x_i\leq \rk_\V(A)-1 \mbox{ for all } \emptyset \neq A \subseteq [n]\right\}.$$

\begin{proposition} 
\label{DilTrun}
The set $\PM(\V)^*$  is a discrete polymatroid whose associated  rank  function is  the so-called Dilworth truncation   $\rk_\V^*: 2^{[n]}\to \NN$ of $\rk_\V$ defined as
$$\rk_\V^*(A)= \min\left\{ \sum_{i=1}^p \rk_\V(A_i)-p : A_1,\dots, A_p \mbox{  is a partition of } A\right\}$$
if $A\neq \emptyset$ and $\rk_\V^*(\emptyset)=0$.
 \end{proposition}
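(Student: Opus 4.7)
The strategy is to verify that $\rk_\V^*$ is the rank function of some discrete polymatroid $\Q$, and then to show $\Q=\PM(\V)^*$. This is an instance of the classical Dilworth truncation theorem for submodular set functions; here it applies because $\rk_\V$ is monotone, submodular, integer-valued, with $\rk_\V(\emptyset)=0$ and $\rk_\V(\{i\})=d_i\geq 1$ for every $i\in[n]$, since each $V_i$ is non-zero.

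Nonnegativity of $\rk_\V^*(A)$ on $A\neq\emptyset$ and the normalization $\rk_\V^*(\emptyset)=0$ are immediate, and monotonicity follows because any partition of $A\subseteq B$ extends to a partition of $B$ by adjoining the singletons $\{i\}$ for $i\in B\setminus A$, each contributing $\rk_\V(\{i\})-1=d_i-1\geq 0$. The essential content is submodularity, namely $\rk_\V^*(A)+\rk_\V^*(B)\geq \rk_\V^*(A\cup B)+\rk_\V^*(A\cap B)$; I expect this to be the main obstacle. I would carry it out via an uncrossing argument: given optimal partitions $\{A_i\}$ of $A$ and $\{B_j\}$ of $B$, whenever some $A_i$ properly crosses some $B_j$, the submodularity of $\rk_\V$ permits replacing the pair $A_i,B_j$ by $A_i\cap B_j$ and $A_i\cup B_j$ without increasing the total cost $\sum(\rk_\V(\cdot)-1)$. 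Iterating, measured by a suitable potential that strictly decreases, one reduces to the laminar situation in which the blocks reorganize into a partition of $A\cap B$ together with a partition of $A\cup B$ whose combined cost is bounded by $\rk_\V^*(A)+\rk_\V^*(B)$. This is the classical Dilworth--Edmonds argument.

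Once $\rk_\V^*$ is established as a monotone, submodular, integer-valued function with $\rk_\V^*(\emptyset)=0$, it is by definition the rank function of the discrete polymatroid $\Q=\{x\in \NN^n : \sum_{i\in A}x_i\leq \rk_\V^*(A) \text{ for all } A\subseteq[n]\}$. It remains to identify $\Q$ with $\PM(\V)^*$. The inclusion $\Q\subseteq \PM(\V)^*$ follows by applying the defining inequality of $\Q$ together with the fact that $\rk_\V^*(A)\leq \rk_\V(A)-1$, obtained from the trivial partition $\{A\}$ in the definition of $\rk_\V^*$. For the reverse inclusion, let $x\in \PM(\V)^*$ and let $A_1,\dots,A_p$ be any partition of a non-empty subset $A\subseteq [n]$. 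Summing the inequalities $\sum_{i\in A_j}x_i\leq \rk_\V(A_j)-1$ over $j=1,\dots,p$ gives $\sum_{i\in A}x_i\leq \sum_{j=1}^p(\rk_\V(A_j)-1)$, and taking the minimum over all partitions yields $\sum_{i\in A}x_i\leq \rk_\V^*(A)$, so $x\in \Q$. This completes the identification and the proof.
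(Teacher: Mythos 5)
Your route is genuinely different from the paper's. The paper disposes of this proposition by citation: it notes that the polymatroid claim is a special case of Edmonds' Theorem~8 and that the full statement follows by combining Theorems~2.5 and~3.53 of Fujishige's monograph. You instead sketch a self-contained proof of the Dilworth truncation theorem via the classical uncrossing argument and then identify the resulting polymatroid with $\PM(\V)^*$ by a two-sided inclusion. That plan is legitimate, and your final identification step (both inclusions $\Q\subseteq\PM(\V)^*$ and $\PM(\V)^*\subseteq\Q$) is clean and correct. The uncrossing sketch for submodularity is also in the right spirit, though it elides the nontrivial combinatorial step: after uncrossing, the maximal sets of the laminar family partition $A\cup B$, and the non-maximal ones (which must then lie in $A\cap B$ because elements of $A\triangle B$ are covered only once) partition $A\cap B$.

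There is, however, a concrete error in your monotonicity step. You argue that a partition of $A\subseteq B$ extends to one of $B$ by adjoining the singletons $\{i\}$, $i\in B\setminus A$, each of cost $d_i-1\geq 0$. This only yields the upper bound $\rk_\V^*(B)\leq\rk_\V^*(A)+\sum_{i\in B\setminus A}(d_i-1)$, which is not monotonicity and points in the wrong direction. The correct argument restricts an optimal partition $B_1,\dots,B_p$ of $B$ to $A$: with $A_j=A\cap B_j$ for those $j$ having $A\cap B_j\neq\emptyset$, one gets
$$\rk_\V^*(A)\leq\sum_j\bigl(\rk_\V(A_j)-1\bigr)\leq\sum_{j=1}^p\bigl(\rk_\V(B_j)-1\bigr)=\rk_\V^*(B),$$
using monotonicity of $\rk_\V$ together with $\rk_\V(B_j)\geq 1$ to discard the indices with $A\cap B_j=\emptyset$ harmlessly. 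A minor additional slip: the standard potential $\sum_X|X|^2$ strictly \emph{increases} under uncrossing of a crossing pair (it is bounded above, so termination still holds), so your phrase ``strictly decreases'' should be reversed or a different potential chosen.
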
 
  
In other words, $\rk_\V^*$ is the unique function satisfying properties (a),(b),(c),(d) in \cite[p.12]{E} such that: 

$$\PM(\V)^*=\left\{ x\in \NN^n : \sum_{i\in A} x_i\leq \rk_\V^*(A) \mbox{ for all } A \subseteq [n]\right\}.$$
In particular, 

$$\rk_\V^*(A)=\max\left\{ \sum_{i\in A} x_i :  x\in \PM(\V)^* \right\} $$
and 
$$\rk_\V^*([n])=\max\left\{ |x| : x\in \PM(\V)^*\right\}.$$

The assertion that $\PM(\V)^*$ is a polymatroid is a special case of Theorem 8 in Edmonds \cite{E}. 
A   proof of \ref{DilTrun} is obtained by combining Theorem 2.5 and Theorem 3.53  in  Fujishige's  monograph \cite{F}.  

We collect  now some simple facts about the vector spaces $W_a$ associated  to  a given subspace arrangement $\V$ and their relations with the two polymatroids just introduced. 

We have: 

\begin{lemma} \label{dependence}
Assume that there is a nontrivial linear dependence relation among the generators of $W_a$ involving one of the generators of $V_{q}$. Then $V_{q}\subseteq W_{a-e_{q}}$. 
\end{lemma}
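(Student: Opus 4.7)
The plan is to separate the generators coming from $V_q$ from the rest and to exploit the genericity of the basis of $V_q$ granted by Assumption~\ref{assum}. I would set $W'=\langle f_{ij}:i\neq q,\, j\in[a_i]\rangle$, so that $W_a=W'+\langle f_{q1},\dots,f_{q,a_q}\rangle$ and $W_{a-e_q}=W'+\langle f_{q1},\dots,f_{q,a_q-1}\rangle$, and write $r=\dim_K(W'\cap V_q)$.

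The first step is to extract from Assumption~\ref{assum} the following genericity principle: for every $t\in\{0,1,\dots,d_q\}$,
\[
\dim_K\bigl((W'\cap V_q)+\langle f_{q1},\dots,f_{qt}\rangle\bigr)=\min(r+t,\,d_q).
\]
The point is that $W'+\langle f_{q1},\dots,f_{qt}\rangle$ equals $W_{a'}$ for $a'=(a_1,\dots,a_{q-1},t,a_{q+1},\dots,a_n)$, whose dimension is the largest possible by Assumption~\ref{assum}. Since this sum lies inside $W'+V_q$ (of dimension $\dim W'+d_q-r$), that maximum is $\dim W'+\min(t,d_q-r)$, and inclusion-exclusion converts this into the displayed identity.

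Next I would translate the hypothesis. A nontrivial dependence involving some $f_{qk}$ with $k\in[a_q]$ exhibits $f_{qk}$ as an element of $W'+\langle f_{qj}:j\in[a_q]\setminus\{k\}\rangle$; since $f_{qk}\in V_q$, Dedekind's modular law places it in $(W'\cap V_q)+\langle f_{qj}:j\in[a_q]\setminus\{k\}\rangle$. Hence $(W'\cap V_q)+\langle f_{q1},\dots,f_{q,a_q}\rangle$ coincides with $(W'\cap V_q)+\langle f_{qj}:j\in[a_q]\setminus\{k\}\rangle$, and therefore has dimension at most $r+a_q-1$. Combined with the genericity principle at $t=a_q$, this forces $r+a_q>d_q$, equivalently $r+a_q-1\ge d_q$.

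To conclude, Dedekind yields $V_q\cap W_{a-e_q}=(W'\cap V_q)+\langle f_{q1},\dots,f_{q,a_q-1}\rangle$, and by the genericity principle at $t=a_q-1$ the right-hand side has dimension $\min(r+a_q-1,d_q)=d_q$. Therefore $V_q\cap W_{a-e_q}=V_q$, giving the desired inclusion $V_q\subseteq W_{a-e_q}$. The main obstacle I anticipate is cleanly establishing the genericity principle — one must verify that the ``largest possible dimension'' in Assumption~\ref{assum} really is $\dim W'+\min(t,d_q-r)$ and not something smaller. Once this intersection-dimension formula is in hand, the remainder is routine bookkeeping with the modular law and inclusion-exclusion.
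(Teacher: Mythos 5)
Your proof is correct, but it takes a genuinely different route from the paper's. The paper's argument is short and direct: it picks the \emph{largest} index $p$ such that $f_{qp}$ appears with a nonzero coefficient in the dependence relation. Since no $f_{qj}$ with $j>p$ occurs, solving for $f_{qp}$ shows $f_{qp}\in W_b$ where $b=a$ except $b_q=p-1$; then genericity (if $V_q\not\subseteq W_b$, pick $v\in V_q\setminus W_b$, replace $f_{qp}$ by $v$ in the basis of $V_q$, and observe that $\dim W_{b+e_q}$ would jump, contradicting Assumption~\ref{assum}) gives $V_q\subseteq W_b\subseteq W_{a-e_q}$. Your approach avoids isolating a single $f_{qp}$ and instead runs a systematic dimension count: you isolate $W'=\langle f_{ij}:i\neq q\rangle$, establish the intersection-dimension formula $\dim\bigl((W'\cap V_q)+\langle f_{q1},\dots,f_{qt}\rangle\bigr)=\min(r+t,d_q)$ from Assumption~\ref{assum}, and then deduce first that $r+a_q-1\geq d_q$ and then that $\dim(V_q\cap W_{a-e_q})=d_q$, both via the modular law. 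The trade-off is clear: the paper's proof is slicker but leaves the genericity step terse (``because of the choice of the $f_{ij}$'s''), whereas yours makes the genericity mechanism fully explicit at the cost of length. One small point worth tightening in your write-up: the lower-bound half of your genericity principle (that the maximum of $\dim W_{a'}$ over all collections of bases really equals $\dim W'+\min(t,d_q-r)$ for the \emph{given} $W'$ and $r$) should be justified by noting that keeping the bases of $V_i$, $i\neq q$, fixed and varying only the basis of $V_q$ already realizes this value; combined with the obvious upper bound and Assumption~\ref{assum}, this pins down the dimension. You flag this yourself as ``the main obstacle,'' and indeed that is the one step that needs to be spelled out to make the argument airtight.
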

\begin{proof}
For the given $q$ let $p$ be the largest index such that $f_{qp}$ appears in a nontrivial linear dependence relation among the generators of $W_a$.  This implies that $f_{qp}\in W_b$ with $b=(b_1,\dots, b_n)$ and 
$b_k=a_k$ for $k\neq q$ and $b_{q}=p-1$. But because of the choice of the $f_{ij}$'s  this implies that $V_{q}\subseteq W_b\subseteq W_{a-e_{q}}$. 
\end{proof}

\begin{lemma} \label{decomposition}
Set $T=\{ i\in [n]  : V_i \subseteq W_a \}$ and $b\in \NN^n$ with $b_i=0$ if $i\in T$ and $b_i=a_i$ otherwise. 
Furthermore set $c=a-b$. Then 
\begin{itemize}
\item[(1)] $W_a=W_b+  \sum_{i\in  T} V_i  $, 
\item[(2)] $\dim_K W_b=|b|$, i.e. the elements $f_{ij}$ with $i\not\in T$ and $j\leq a_i$ are linearly independent,  
\item[(3)] $W_b\cap  ( \sum_{i\in  T} V_i) =0$, 
\item[(4)] $W_c= \sum_{i\in  T} V_i$, 
\item[(5)] $\dim_K W_a=\sum_{i\not \in T} a_i+\rk_\V(T)$. 
\end{itemize} 
 
\end{lemma}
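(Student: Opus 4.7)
The plan is to establish (1)--(5) in order, using (2) to feed into (3), and deducing (4)--(5) from (1)--(3). The genuine content lies in (3); the rest is either immediate from the definitions or follows by dimension bookkeeping.

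Item (1) is immediate: the generators of $W_a$ split according to $i\in T$ (those lying in $\sum_{i\in T}V_i$) and $i\notin T$ (those spanning $W_b$); combined with $V_i\subseteq W_a$ for $i\in T$ (the defining property of $T$), both containments follow. For (2), I suppose a nontrivial linear dependence among the generators of $W_b$, namely the $f_{ij}$ with $i\notin T$ and $j\leq a_i$. Applying Lemma \ref{dependence} with $b$ in place of $a$, some index $q$ participating in the dependence (and hence with $b_q\geq 1$, forcing $q\notin T$) satisfies $V_q\subseteq W_{b-e_q}\subseteq W_a$; this forces $q\in T$, a contradiction.

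The core step (3) proceeds as follows. Suppose a nonzero $w\in W_b\cap \sum_{i\in T}V_i$ exists. Using (2), write $w=\sum_{j\notin T,\,k\leq a_j}c_{jk}f_{jk}$ with some $c_{jk}\neq 0$ (from $w\in W_b$), and $w=\sum_{i\in T}v_i$ with $v_i\in V_i$ expanded as $v_i=\sum_{\ell\leq d_i}d_{i\ell}f_{i\ell}$ (from $w\in\sum_{i\in T}V_i$). Subtracting yields a nontrivial linear dependence among the generators of $W_{a^*}$, where $a^*_i=d_i$ for $i\in T$ and $a^*_j=a_j$ for $j\notin T$. Since some $c_{qk}$ is nonzero with $q\notin T$, the generator $f_{qk}$ of $V_q$ participates in the dependence. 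Applying Lemma \ref{dependence} with $a^*$ in place of $a$ gives $V_q\subseteq W_{a^*-e_q}$. Unpacking, $W_{a^*-e_q}=W_{b-e_q}+\sum_{i\in T}V_i\subseteq W_b+\sum_{i\in T}V_i=W_a$ by (1), so $V_q\subseteq W_a$, forcing $q\in T$, a contradiction. The hard part, and the main obstacle, is recognizing that one should not apply Lemma \ref{dependence} directly to $a$ but rather inflate $a$ to $a^*$ by filling in full bases of the $V_i$ for $i\in T$, so that the $\sum_{i\in T}V_i$-part of $w$ is repackaged as a linear combination of generators to which the dependence lemma applies.

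Assertions (4) and (5) are consequences. For (4), the generator decomposition of $W_a$ yields $W_a=W_b+W_c$. Given $v\in V_i\subseteq W_a$ with $i\in T$, write $v=w+u$ with $w\in W_b$ and $u\in W_c\subseteq \sum_{j\in T}V_j$; then $w=v-u\in W_b\cap\sum_{j\in T}V_j=0$ by (3), so $v=u\in W_c$. This gives $\sum_{i\in T}V_i\subseteq W_c$, and the reverse inclusion is obvious. For (5), by (1), (2), (3) and inclusion--exclusion,
$$\dim_K W_a=\dim_K W_b+\dim_K\Bigl(\sum_{i\in T}V_i\Bigr)-\dim_K\Bigl(W_b\cap\sum_{i\in T}V_i\Bigr)=|b|+\rk_\V(T)=\sum_{i\notin T}a_i+\rk_\V(T).$$
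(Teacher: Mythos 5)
Your proof is correct and follows essentially the same route as the paper: in particular, for the core item (3) you introduce the auxiliary vector $a^*$ (the paper calls it $u$), note $W_{a^*}=W_a$, extract a nontrivial dependence involving a generator $f_{qk}$ with $q\notin T$, and invoke Lemma \ref{dependence} to get a contradiction; the paper's proof is the same argument stated more tersely. The deductions for (1), (2), (4), (5) are also the ones the paper has in mind (the paper simply says (4) and (5) follow from (1)--(3) without the dimension-count and direct-sum bookkeeping you spell out).
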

\begin{proof}
 (1) is obvious. (2) follows from Lemma  \ref{dependence} and the definition of $T$. 
 For (3) we set $u\in \NN^n$ with $u_i=d_i$ if $i\in T$ and $u_i=a_i$ otherwise. 
 Then we observe that, by (1) we have  $W_a=W_u$. If, by contradiction,  $W_b\cap  ( \sum_{i\in  T} V_i)$ is non-zero then there is a non-trivial linear relation among the generators of $W_u$ involving an element $f_{ij}$ with $i\not\in T$. Applying Lemma \ref{dependence} we have that $V_i\subseteq  W_u=W_a$, a contradiction with the definition of $T$. Finally (4) and (5) follow from   (1)-(3). 
 \end{proof} 

\begin{proposition} \label{formula}
We have: 
$$\dim_K W_a=\min \left \{   \sum_{i\not \in T} a_i+ \rk_\V(T)    :   T \subseteq [n] \right \}$$
\end{proposition}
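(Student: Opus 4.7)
The plan is to show both inequalities: that $\dim_K W_a$ is bounded above by every term in the proposed minimum, and that equality is attained for a specific choice of $T$ (supplied by Lemma~\ref{decomposition}).

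First, for the upper bound, I would fix an arbitrary $T\subseteq[n]$ and observe that $W_a$ is by definition spanned by the vectors $f_{ij}$ with $i\in[n]$ and $j\in[a_i]$. Split this spanning set according to whether $i\in T$ or not. For $i\in T$, the generators $f_{i1},\dots,f_{i,a_i}$ all lie in $V_i$, hence in $\sum_{i\in T} V_i$; therefore the span of all such vectors is contained in $\sum_{i\in T} V_i$, which has dimension $\rk_\V(T)$. For $i\notin T$, we contribute at most $a_i$ vectors. This yields
$$W_a \subseteq \sum_{i\notin T} \langle f_{ij} : j\in[a_i]\rangle + \sum_{i\in T} V_i,$$
and hence
$$\dim_K W_a \leq \sum_{i\notin T} a_i + \rk_\V(T).$$
This is valid for every $T\subseteq[n]$, so $\dim_K W_a$ is a lower bound for the minimum on the right-hand side of the proposition.

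Second, to attain the minimum, take $T=\{i\in[n] : V_i\subseteq W_a\}$, which is the distinguished subset appearing in Lemma~\ref{decomposition}. Part (5) of that lemma gives exactly
$$\dim_K W_a = \sum_{i\notin T} a_i + \rk_\V(T),$$
so this particular $T$ realizes equality. Combining the two steps, the minimum equals $\dim_K W_a$, completing the proof.

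There is no real obstacle: the nontrivial content has already been extracted in Lemma~\ref{decomposition}, whose proof relied crucially on the genericity of the bases (Assumption~\ref{assum}) via Lemma~\ref{dependence}. Here we are only packaging that result into a minimum formula, with the upper bound being a one-line spanning argument that uses nothing about genericity.
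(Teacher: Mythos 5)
Your proof is correct and follows essentially the same route as the paper: the upper bound comes from the containment $W_a \subseteq \langle f_{ij} : i\notin T,\ j\le a_i\rangle + \sum_{i\in T} V_i$ for arbitrary $T$, and equality is achieved by taking $T=\{i : V_i \subseteq W_a\}$ via Lemma~\ref{decomposition}(5). You have merely spelled out the spanning argument in slightly more detail than the paper does.
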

\begin{proof} For every $T \subseteq [n]$ we have 
$$W_a\subseteq  \langle f_{ij} : i\not\in T \mbox{ and } j\leq a_i \rangle + \sum_{i\in T} V_i  $$
and therefore
$$\dim_K W_a \leq  \sum_{i\not\in T} a_i+\rk_\V(T).$$
It remains to prove that at least for one subset $T$ we have equality and this follows from Lemma \ref{decomposition} part (5).  
\end{proof} 

\begin{corollary} \label{corollary}
The following conditions are equivalent: 
\begin{itemize} 
\item[(1)] $\dim_K W_a=|a|$, i.e. the $f_{ij}$'s with $j\leq a_i$ are linearly independent.  
\item[(2)]  $ \sum_{i\in T} a_i \leq \rk_\V(T)$ for every $T\subseteq [n]$, i.e. $a\in \PM(\V)$.   
\end{itemize}   
\end{corollary}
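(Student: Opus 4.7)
The plan is to derive this equivalence as an immediate corollary of Proposition \ref{formula}, which already gives a minimax formula for $\dim_K W_a$. There is no real extra content to prove beyond a short manipulation, so the ``hard'' step is already done in \ref{formula}; what remains is essentially to recognize condition (2) as the assertion that the minimum in \ref{formula} is attained at $T=\emptyset$.

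More concretely, I would start by noting that taking $T=\emptyset$ in the formula
$$\dim_K W_a=\min\left\{\sum_{i\not\in T} a_i+\rk_\V(T):T\subseteq[n]\right\}$$
yields the value $|a|$, so $\dim_K W_a\leq |a|$ always holds (which is of course also clear from the fact that $W_a$ is generated by $|a|$ elements). Hence $\dim_K W_a=|a|$ is equivalent to saying that $T=\emptyset$ realizes the minimum, i.e.\ to the inequality $\sum_{i\notin T} a_i+\rk_\V(T)\geq |a|$ for every $T\subseteq[n]$.

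Next I would rewrite this inequality: since $|a|=\sum_{i\in T} a_i+\sum_{i\notin T} a_i$, subtracting $\sum_{i\notin T}a_i$ from both sides turns it into $\rk_\V(T)\geq \sum_{i\in T} a_i$ for every $T\subseteq[n]$, which is exactly the defining condition of $a\in \PM(\V)$. This gives (1) $\Leftrightarrow$ (2) directly.

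The only potential obstacle is purely presentational: making sure the case $T=\emptyset$ in condition (2) is correctly interpreted (both sides are zero, so it is vacuous), and recording that the direction (2) $\Rightarrow$ (1) uses the existence part of \ref{formula} (some $T$ achieves the minimum and hence equality) while (1) $\Rightarrow$ (2) uses the upper bound $\dim_K W_a\leq \sum_{i\not\in T}a_i+\rk_\V(T)$ for every $T$, which in turn comes from Lemma \ref{decomposition}. Beyond this bookkeeping, no further machinery is needed.
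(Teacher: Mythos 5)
Your proof is correct and follows essentially the same route as the paper's: the implication $(2)\Rightarrow(1)$ is read off from Proposition \ref{formula}, and $(1)\Rightarrow(2)$ amounts to the same upper bound $\dim_K W_a\leq \sum_{i\notin T}a_i+\rk_\V(T)$ that underlies \ref{formula}. The only presentational difference is that the paper declares $(1)\Rightarrow(2)$ ``obvious'' directly from the definition of $\rk_\V$ (linearly independent vectors lying in $\sum_{i\in T}V_i$ force $\sum_{i\in T}a_i\leq\rk_\V(T)$), whereas you route both directions through \ref{formula} by rewriting the inequality; both are equivalent one-line observations.
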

\begin{proof} The implication $(1)\implies  (2)$ is obvious.  The implication $(2)\implies  (1)$   follows from Proposition \ref{formula}. 
\end{proof}

\begin{proposition} \label{notsub} The following conditions are equivalent: 
\begin{itemize} 
\item[(1)] for every $i$ one has  $V_i\not\subseteq W_a$. 
\item[(2)] for every  $\emptyset\neq T\subseteq [n]$ one has  $\sum_{i\in T} a_i \leq  \rk_\V(T)-1$, i.e. $a\in \PM(\V)^*$.
\end{itemize} 
\end{proposition}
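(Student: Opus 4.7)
The plan is to prove both implications of Proposition~\ref{notsub} by contradiction, using the structural information about $W_a$ provided by Lemma~\ref{decomposition} and Corollary~\ref{corollary}.

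For the easy direction $(2)\Rightarrow(1)$, I would suppose instead that some $V_q\subseteq W_a$ and let $T=\{i\in[n] : V_i\subseteq W_a\}$, which is then non-empty. Lemma~\ref{decomposition}(5) supplies the exact formula $\dim_K W_a=\sum_{i\notin T}a_i+\rk_\V(T)$. On the other hand, $W_a$ is spanned by $|a|$ vectors, so $\dim_K W_a\leq|a|$, and after rearrangement this reads $\rk_\V(T)\leq \sum_{i\in T}a_i$. This directly contradicts the inequality $\sum_{i\in T}a_i\leq \rk_\V(T)-1$ furnished by~(2).

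For the reverse direction $(1)\Rightarrow(2)$, I would first observe that under~(1) the set $T$ of Lemma~\ref{decomposition} is empty, so part~(5) of that lemma gives $\dim_K W_a=|a|$; Corollary~\ref{corollary} then upgrades this to $a\in\PM(\V)$, i.e.\ the weak inequalities $\sum_{i\in A}a_i\leq \rk_\V(A)$ already hold. The substance of the argument is to exclude equality in these inequalities. Suppose then, for contradiction, that some non-empty $T\subseteq[n]$ satisfies $\sum_{i\in T}a_i=\rk_\V(T)$. Define an auxiliary $b\in\NN^n$ by $b_i=a_i$ for $i\in T$ and $b_i=0$ otherwise; since $b\leq a$ coordinatewise and $a\in\PM(\V)$, also $b\in\PM(\V)$, whence Corollary~\ref{corollary} yields $\dim_K W_b=|b|=\sum_{i\in T}a_i$. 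By construction the generators of $W_b$ all lie in $\bigcup_{i\in T}V_i$, so $W_b\subseteq \sum_{i\in T}V_i$, and the two spaces share the common dimension $\rk_\V(T)$. Hence $W_b=\sum_{i\in T}V_i$, so $V_j\subseteq W_b\subseteq W_a$ for every $j\in T$, contradicting~(1).

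I expect the borderline case $\sum_{i\in T}a_i=\rk_\V(T)$ in the second implication to be the only real obstacle: the inequality chain alone does not detect it, and the idea of restricting the coordinates of $a$ to $T$ to build an ``overdetermined'' subspace $W_b$ of $\sum_{i\in T}V_i$ is what forces the inclusion $V_j\subseteq W_a$. Both directions are otherwise formal consequences of the dimension identity in Lemma~\ref{decomposition}(5).
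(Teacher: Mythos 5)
Your proof is correct and follows essentially the same route as the paper's, resting on the dimension identity of Lemma~\ref{decomposition}(5) and the linear-independence criterion of Corollary~\ref{corollary}. The only cosmetic differences are that in $(2)\Rightarrow(1)$ you use the trivial bound $\dim_K W_a\leq|a|$ rather than first establishing equality, and in $(1)\Rightarrow(2)$ you introduce the auxiliary vector $b$ and appeal to Corollary~\ref{corollary} a second time where the paper argues directly with the linearly independent set $\{f_{ij}: i\in T,\ j\leq a_i\}$; both variants are sound.
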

\begin{proof}  $(1)\implies  (2)$: By virtue of Lemma  \ref{dependence} we know that the $f_{ij}$'s with $j\leq a_i$ are linearly independent. Hence for every non-empty $T\subseteq [n]$ we have 
$$\sum_{i\in T} a_i=\dim_K \langle f_{ij} : i\in T \mbox{ and } j\leq a_i\rangle \leq  \rk_\V(T)$$ 
and, if equality holds, we have $\sum_{i\in T} V_i\subseteq  W_a$ contradicting the assumption. 

$(2)\implies  (1)$. The assumption and Corollary \ref{corollary} imply that the $f_{ij}$'s's with $j\leq a_i$ are linearly independent. 
By contradiction suppose that $T=\{ i\in [n] : V_i\subseteq W_a\}$ is not empty.  
By Lemma \ref{decomposition} (5) we have
$$\dim_K W_a=\sum_{i\not\in  T} a_i+\rk_\V(T)$$ 
and by hypothesis $\rk_\V(T)>\sum_{i\in T} a_i$. It follows that 
$\dim_K W_a>|a|$ which is clearly a contradiction. 
\end{proof} 

\section{Ideals associated to subspace arrangements  and poset ideals} 
Given a subspace arrangement $\V=V_1,\dots, V_n$ of dimension $(d_1,\dots, d_n)$ we consider the ideal $I_i$ of $S$ generated by $V_i$ and set 
$$J=I_1I_2\cdots I_n.$$
We fix  a collection of bases  $f=\{ f_{ij} : i\in [n]  \mbox{ and }  j\in [d_i]\}$ of $\V$ satisfying  Assumption \ref{assum}.  
On $\NN^n$ we consider the standard poset structure defined as  $a \ge b$ if $a_i \ge b_i$ for every $i \in [n]$. Indeed $(\NN^n,\leq)$ is a distributive lattice with 
$$a\wedge b=(\min(a_1,b_1), \dots, \min(a_n,b_n))$$
 and
 $$a\vee b=(\max(a_1,b_1), \dots, \max(a_n,b_n)).$$ 
Consider the hypercube  $D=[d_1]\times \cdots \times [d_n]\subset \NN^n$ with the induced  poset structure. 
%
 A poset ideal of $D$ is a subset $P\subseteq D$ such that if $a,b\in D$ and  $a\leq b\in P$ implies $a\in P$. 
 
For every $a\in D$ we set $f_a=\prod_{i=1}^n f_{ia_i}$ and observe that $J=( f_a : a \in D)$. Furthermore for $a\in \NN^n$ with  $a_i\leq d_i$ we   denote by $I_a$ the ideal of $S$ generated by the vector space $W_a=\langle f_{ij} : i \in [n] \mbox{ and } j \le a_i \rangle$. For every poset ideal $P$ of $D$ we define an ideal of the polynomial ring $S$ as follows: 
$$J_P=( f_a : a\in P).$$ 

Clearly $J_P$ depends on $\V$ but also on the collection of bases $f$ that we consider.  In particular $J=J_D$ and $J_\emptyset=\{0\}$.   
Let $a$ be a maximal element of a non-empty poset ideal $P$. Then $Q=P\setminus\{a\}$ is itself a poset ideal.  Furthermore set $b=a-(1,1,\dots,1)$. 
With this notation our first goal is to prove: 
\begin{theorem}
\label{main} \ 
\begin{itemize} 
\item[(1)] $J_P$ has a linear resolution. 
\item[(2)]  If  $f_a \not\in I_b$  then $J_Q:(f_a)= I_b$  and  if  $f_a \in  I_b$ then $f_a\in J_Q$ i.e.  $J_Q:(f_a) = S$. 
\end{itemize} 
 \end{theorem}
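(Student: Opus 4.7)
The plan is to establish (2) first and then deduce (1) by induction on $|P|$. The inclusion $I_b \subseteq J_Q : (f_a)$ holds in both subcases by a direct computation: for any generator $f_{i,j}$ of $W_b$ (with $j \le a_i - 1$), one has $f_{i,j} \cdot f_a = f_{i,a_i} \cdot f_{a'}$ where $a'$ is obtained from $a$ by replacing its $i$-th coordinate by $j$. Since $a' < a$ and $P$ is a poset ideal, $a' \in Q$. Moreover, $J_Q \subseteq I_b$ in every case: any $a' \in Q$ has some coordinate $a'_i \le a_i - 1 = b_i$, so $f_{i,a'_i} \in W_b \subseteq I_b$ forces $f_{a'} \in I_b$. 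In the subcase $f_a \notin I_b$, primeness of $I_b$ (coming from $S/I_b$ being a polynomial ring) combined with $J_Q \subseteq I_b$ yields $J_Q:(f_a) \subseteq I_b$, and hence equality.

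The harder subcase is $f_a \in I_b$, where I must show $f_a \in J_Q$. By primeness, some $f_{q,a_q} \in W_b$, and Lemma \ref{dependence} then gives $V_q \subseteq W_{a-e_q}$. Writing the linear dependence $f_{q,a_q} = \sum_{k,j:\, j \le a_k - 1} c_{kj} f_{kj}$ and substituting into $f_a = f_{q,a_q} \prod_{l \ne q} f_{l,a_l}$, the terms with $k=q$ become $c_{qj} f_{a'}$ where $a'$ has $q$-th coordinate $j$ and other coordinates as in $a$; since $a' \in Q$, these terms lie in $J_Q$. The main obstacle is to dispose of the terms with $k \ne q$, each of which carries two factors from $V_k$ and no factor from $V_q$. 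My approach is to apply Lemma \ref{decomposition} to $a$, with $T = \{i : V_i \subseteq W_a\}$ (which contains $q$), and to iteratively re-expand each $f_{k,a_k}$-factor inside a basis of $\sum_{i \in T} V_i$ chosen so as to incorporate elements of $V_q$; this replaces each problematic term by an element of $J_Q$. This is the heart of the argument and relies essentially on the genericity guaranteed by Assumption \ref{assum}.

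Part (1) then follows by induction on $|P|$, the base case $|P|=1$ being immediate. In the inductive step, (2) supplies the short exact sequence
\[
0 \longrightarrow J_Q \longrightarrow J_P \longrightarrow S(-n)\big/\bigl(J_Q:(f_a)\bigr) \longrightarrow 0,
\]
whose right-hand term is either zero (when $f_a \in J_Q$) or isomorphic to $S(-n)/I_b$. Since $S/I_b$ is a polynomial ring of regularity $0$, this quotient has regularity at most $n$, and by the inductive hypothesis $\reg J_Q = n$. The standard regularity estimate applied to the short exact sequence then gives $\reg J_P \le n$, while the degrees of the generators of $J_P$ force $\reg J_P \ge n$, so $J_P$ has a linear resolution.
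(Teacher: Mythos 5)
The easy parts of your argument are fine and essentially match the paper's: the chain $I_b f_a \subseteq J_Q \subseteq I_b$ together with primeness of $I_b$ gives $J_Q : (f_a) = I_b$ when $f_a \notin I_b$, and the short-exact-sequence/regularity argument deducing (1) from (2) is correct (the paper uses the equivalent sequence $0 \to S/I_b(-n) \to S/J_Q \to S/J_P \to 0$).

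The genuine gap is the hard case of (2), namely showing $f_a \in J_Q$ when $f_a \in I_b$. You correctly identify $T = \{i : V_i \subseteq W_b\} \ni q$ (in fact Lemma \ref{dependence} applied to $W_{b+e_q}$ gives the stronger $V_q \subseteq W_b$, not just $V_q \subseteq W_{a-e_q}$), but from there you only \emph{describe} an intended manipulation (``iteratively re-expand each $f_{k,a_k}$-factor inside a basis of $\sum_{i\in T}V_i$ chosen so as to incorporate elements of $V_q$'') without carrying it out. The problematic terms $f_{kj}\cdot f_{k,a_k}\cdot\prod_{l\neq q,k}f_{l,a_l}$ are not of the form $f_c$ for any $c\in D$, and it is not at all clear that any rebasing makes them land in $J_Q$; your sketch is not a proof. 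The paper avoids this entirely by a \emph{joint} induction on $|P|$: after permuting so $T=[m]$, one observes $W_{b'}=\sum_{i\in T}V_i$ (Lemma \ref{decomposition}(4)), so $I_{b'}$ is the maximal ideal of the subring $S'$; the inclusion $I_{b'}\subseteq J_{A'}:f_{a'}$ means $f_{a'}$ lies in the saturation of $J_{A'}$ inside $S'$; since $|A'|<|P|$, the inductive hypothesis for \emph{part (1)} gives $J_{A'}$ a linear resolution, hence $J_{A'}$ is saturated from degree $m$ on, so $f_{a'}\in J_{A'}$ and thus $f_a\in J_A\subseteq J_Q$. This also explains why your plan ``establish (2) first and then deduce (1) by induction'' cannot work as stated: the proof of (2) in the hard case genuinely needs (1) for a smaller poset ideal, so (1) and (2) must be proved simultaneously by induction on $|P|$.
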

 
 \begin{proof} We prove the assertions by induction on the cardinality of $P$. Both assertions are obvious when $P$ has only one element. Note that (2) actually implies (1) because we have either $J_Q=J_P$ and we conclude by induction or we have  the short exact sequence 
 
$$
0\to S/I_b(-n)\to S/J_Q\to S/J_P\to 0 
$$
and again we conclude by induction.  
 So it remains to prove (2).  Set $A=\{ u\in D : u<a\}$. By construction $A\subseteq Q$ is a poset ideal and   
$$I_bf_a\subseteq J_A\subseteq J_Q \subseteq I_b.$$
 Hence 
 $$I_b\subseteq J_Q:f_a\subseteq I_b:f_a.$$
 Since $I_b$ is prime we have that $I_b=J_Q:f_a$ provided $f_a\not\in I_b$.

 It remains to prove that if $f_a\in I_b$ then actually $f_a\in J_Q$. Since $I_b$ is prime we have that $f_{ia_i}\in I_b$ for at least one $i\in [n]$ and this implies, by the choice of the $f_{ij}$'s, that  $V_i\subseteq   W_b$. Therefore the set $T=\{ i\in [n] : V_i\subseteq   W_b\}$ is not empty.  Up to a permutation of the coordinates we may assume that $T=\{1,\dots,m\}$ for some $m>0$. Set $a'=(a_1,\dots,a_m)$, $A'=\{ u'\in \NN^m : u'<a'\}$ and $b'=(b_1,\dots, b_m)$. 
 We have $I_{b'}\subseteq J_{A'}:f_{a'}$ by construction and $W_{b'}=\sum_{i\in [m]} V_i$ by Lemma \ref{decomposition} (4),   i.e. $I_{b'}$ is the maximal homogeneous ideal of the sub-polynomial ring $S'$ of $S$ generated by $\sum_{i\in [m]} V_i$. Since  the generators of $J_{A'}$ and $f_{a'}$ already belong to $S'$, we have that $f_{a'}$ is in the saturation of  $J_{A'}$ in $S'$. Note that $A'$ is a poset ideal of $D'=[d_1]\times \dots \times [d_m]$ and $|A'|\leq |A|<|P|$.  Hence, by induction, $J_{A'}$ has a linear resolution and therefore  it is saturated from degree $m$ and on. It follows that 
 $f_{a'}\in J_{A'}$ and then 
 $$f_a= f_{a'} \prod_{i=m+1}^n f_{ia_i}  \in J_{A'} \Bigg( \prod_{i=m+1}^n f_{ia_i} \Bigg) \subseteq J_A\subseteq J_Q$$
 as desired. 
\end{proof} 

Theorem \ref{main} has some important corollaries.  
We set 
$$D_\V=(1,\dots ,1)+\PM(\V)^*= \left\{ a\in D : \sum_{i\in T} a_i -|T|\leq \rk_\V(T)-1 \mbox{ for every } \emptyset \neq T\subseteq [n] \right\}.$$

\begin{corollary}\label{essential} Let $P$ be a poset ideal of $D$. Set $P'=P\cap D_\V$.  
We have  $J_P=J_{P'}$. In particular, $J=J_{D_\V}$. 
\end{corollary}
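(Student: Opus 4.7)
The plan is to prove $J_P = J_{P'}$ by induction on the cardinality of $P \setminus P'$, peeling off one generator $f_a$ at a time with $a \in P \setminus P'$ via Theorem \ref{main}(2).

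The key preparatory observation is that $D_\V$ is downward-closed in $D$, or equivalently $D \setminus D_\V$ is upward-closed: if $a \leq a'$ are both in $D$ and $a \notin D_\V$, then $a' \notin D_\V$. This is immediate from the definition of $\PM(\V)^*$, which is cut out by the inequalities $\sum_{i \in T} x_i \leq \rk_\V(T)-1$; these inequalities are preserved under replacing $x$ by a coordinate-wise smaller nonnegative vector, so their failure at $a-(1,\dots,1)$ persists at $a'-(1,\dots,1)$. A direct consequence is that every element maximal in $P \setminus P'$ is already maximal in $P$, since any strictly larger element of $P$ would itself lie in $D \setminus D_\V$ by upward-closedness, contradicting the choice.

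For the inductive step, assume $P \setminus P' \neq \emptyset$ and pick $a$ maximal in $P \setminus P'$, hence maximal in $P$. Set $Q = P \setminus \{a\}$ and $b = a - (1,\dots,1)$. Since $a \notin D_\V$ we have $b \notin \PM(\V)^*$, so Proposition \ref{notsub} supplies an index $i \in [n]$ with $V_i \subseteq W_b$. Therefore $f_{ia_i} \in W_b$, which gives $f_a = f_{ia_i} \prod_{j \neq i} f_{ja_j} \in I_b$. Theorem \ref{main}(2) then yields $f_a \in J_Q$ and consequently $J_P = J_Q$. Since $Q \cap D_\V = P'$ and $|Q \setminus P'| = |P \setminus P'| - 1$, the inductive hypothesis closes the argument and delivers $J_P = J_Q = J_{P'}$. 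The final assertion $J = J_{D_\V}$ is the special case $P = D$.

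The main (and essentially the only) obstacle is the monotonicity observation above, which is what legitimizes the choice at each step of an element of $P \setminus P'$ that is simultaneously maximal in $P$, so that Theorem \ref{main}(2) applies. Once this is in hand, the combination of Proposition \ref{notsub} — which identifies failure of membership in $D_\V$ with the presence of some $V_i$ inside $W_b$ — and the redundancy criterion of Theorem \ref{main}(2) carries out the reduction mechanically.
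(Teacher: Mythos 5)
Your proof is correct and follows essentially the same route as the paper: peel off a maximal element $a$ of $P$ lying outside $D_\V$, use Proposition \ref{notsub} to conclude $V_i\subseteq W_b$ for some $i$, hence $f_a\in I_b$, and invoke Theorem \ref{main}(2) to get $f_a\in J_Q$, then iterate. The one thing you do more carefully than the paper is spell out why such a maximal element exists whenever $P\setminus D_\V\neq\emptyset$, namely the upward-closedness of $D\setminus D_\V$; the paper compresses this into ``iterating the argument'' without comment.
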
 

\begin{proof} 
Using the notations of Theorem  \ref{main} we have seen that $f_a\in J_Q$ iff $f_a\in I_b$.  
The latter condition holds iff $V_i\subseteq  I_b$ for some $i$ and this is equivalent, in view of Proposition \ref{notsub}, to the the fact that $b\not\in \PM(\V)^*$.  In other words, if $a\in P\setminus D_\V$ then $f_a\in J_Q$, i.e. $J_P=J_Q$.  Iterating the argument one obtains  $J_P=J_{P'}$.
\end{proof} 

In view of Corollary \ref{essential}  when studying the ideal $J_P$ we may assume that $P\subseteq D_\V$.

\begin{corollary}\label{linq} Let $P\subseteq D_\V$ be a poset ideal.    
We have: 
\begin{itemize}
 \item[(1)] $J_P$ has linear quotients. More precisely, any total order on $P$ that refines the partial order $\leq$ gives rise to a total order on the generators of $J_P$ that have linear quotients.  
\item[(2)]  We have: 
$$\sum_{j\geq 0} \beta_i (J_P) z^i=\sum_{a\in P} (1+z)^{|a|-n}
.$$
\end{itemize} 
\end{corollary}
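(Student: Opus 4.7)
My plan is to deduce both parts of the corollary from Theorem \ref{main}(2) by inducting along a linear extension of $P$; almost everything reduces to bookkeeping on this extension.

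For part (1), I would fix any total order $a_1 < a_2 < \cdots < a_N$ on $P$ refining $\leq$. Each prefix $P_k = \{a_1, \ldots, a_k\}$ is again a poset ideal of $D$ contained in $D_\V$ in which $a_k$ is maximal, so Theorem \ref{main}(2) identifies
$$(f_{a_1}, \ldots, f_{a_{k-1}}) : f_{a_k} = J_{P_{k-1}} : f_{a_k}$$
as either $I_{b_k}$, with $b_k = a_k - (1,\ldots,1)$, or the whole ring $S$. The hypothesis $P \subseteq D_\V$ forces $b_k \in \PM(\V)^*$, and Proposition \ref{notsub} then guarantees that no $V_i$ is contained in $W_{b_k}$; this rules out the $S$-case, exactly as in the proof of Theorem \ref{main}. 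Corollary \ref{corollary} identifies $I_{b_k}$ as the ideal generated by the $|b_k| = |a_k| - n$ linearly independent linear forms $f_{ij}$ with $j \leq (b_k)_i$, which is precisely the definition of linear quotients.

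For part (2), I would induct on $|P|$, using the short exact sequence
$$0 \to (S/I_b)(-n) \to S/J_Q \to S/J_P \to 0$$
from the proof of Theorem \ref{main}, where $a$ is a maximal element of $P$, $Q = P \setminus \{a\}$, and $b = a - (1,\ldots,1)$. All three modules have linear free resolutions in the pertinent degree: $S/J_P$ and $S/J_Q$ by Theorem \ref{main}(1), while $(S/I_b)(-n)$ is resolved by the shifted Koszul complex on $|b|$ linear forms, contributing $\binom{|b|}{i}$ to the $i$-th Betti number in internal degree $n+i$. In the long exact $\Tor(-,K)$ sequence restricted to internal degree $n+i$, the terms $\Tor_{i+1}(S/I_b,K)(-n)$ and $\Tor_{i-1}(J_Q,K)$ live in degrees $n+i+1$ and $n+i-1$ respectively, hence vanish, leaving the short exact sequence
$$0 \to \beta_i(J_Q) \to \beta_i(J_P) \to \binom{|a|-n}{i} \to 0.$$
Induction on $|P|$ therefore gives $\beta_i(J_P) = \sum_{a\in P}\binom{|a|-n}{i}$, and summing against $z^i$ with the binomial theorem yields $\sum_{a\in P}(1+z)^{|a|-n}$.

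The main delicate point is the collapse of the $\Tor$ long exact sequence in each internal degree, which requires linearity of the resolutions of all three modules so that boundary maps land in the wrong internal degree and vanish; fortunately linearity is already on hand from Theorem \ref{main}(1) and from the observation that $I_b$ is a complete intersection of linearly independent linear forms. Everything else is straightforward induction on the linear extension chosen for (1).
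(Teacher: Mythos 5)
Your proof is correct and follows essentially the same route as the paper: part (1) is an immediate consequence of Theorem~\ref{main}(2) applied along the linear extension (with Proposition~\ref{notsub} ruling out the $S$-case since $P\subseteq D_\V$), and part (2) is an induction on $|P|$ using the short exact sequence $0\to (S/I_b)(-n)\to S/J_Q\to S/J_P\to 0$ from the proof of Theorem~\ref{main}, together with the degree bookkeeping in the $\Tor$ long exact sequence that you spell out. The only difference is that you supply the details that the paper leaves to the reader.
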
 

\begin{proof} (1)  follows immediately from Theorem  \ref{main} part  (2) while (2) follows from the short exact sequence used in the proof of Theorem  \ref{main}. 
\end{proof}

  Let us single out the special case  

\begin{corollary}\label{linq1} 
\begin{itemize} 
\item[(1)] The ideal $J$ is minimally generated by $f_a$ with $a\in D_\V$.
\item[(2)] The ideal $J$ has linear quotients. Indeed ordering the generators $f_a$ with $a\in D_\V$   according to a linear extension of the partial order $\leq$ gives linear quotients. 
\item[(3)] The Betti numbers of $J$ are given by the formula:   
$$\sum_{i\geq 0} \beta_i (J) z^i=\sum_{a\in D_\V} (1+z)^{|a|-n}=\sum_{j\geq 0} \gamma_i(\V)(1+z)^{i}$$
where $\gamma_i(\V)=  \# \{x\in \PM(\V)^* : |x|=i\}  $. 
\item[(4)] The projective dimension $\pd J$ of $J$ is $\rk^*_\V([n])$, i.e. 
$$\pd J=\min\left \{  \sum_{i=1}^p   \rk_\V(A_i)-p : A_1,\dots, A_p \mbox{ is a partition of } [n] \right \}.$$
\end{itemize} 
\end{corollary}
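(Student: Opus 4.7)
The plan is to derive all four parts as direct specializations of Corollary~\ref{essential} and Corollary~\ref{linq} to the poset ideal $P=D_\V$. The preliminary step is to verify that $D_\V$ is a poset ideal of $D$: given $a\in D_\V$ and $a'\in D$ with $a'\leq a$, for every non-empty $T\subseteq[n]$ one has $\sum_{i\in T}(a'_i-1)\leq\sum_{i\in T}(a_i-1)\leq\rk_\V(T)-1$, so $a'\in D_\V$. Taking $P=D$ in Corollary~\ref{essential} then gives $J=J_D=J_{D_\V}$, so we may invoke Corollary~\ref{linq} with $P=D_\V$ for the remainder of the argument.

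Part (2) is then immediate from Corollary~\ref{linq}(1). For part (1), the fact that $\{f_a:a\in D_\V\}$ is a \emph{minimal} generating set comes out of the same argument: along any linear extension of $\leq$, Theorem~\ref{main}(2) tells us $J_Q:(f_a)=I_b$ where $b=a-(1,\dots,1)\in\PM(\V)^*$; by Proposition~\ref{notsub} no $V_i$ lies in $W_b$, so $I_b$ is a proper ideal, $f_a\not\in I_b$, and in particular $f_a\not\in J_Q$. Because all $f_a$ have the same degree $n$, this prevents $f_a$ from being a $K$-linear combination of the preceding $f_{a'}$'s, so the set is linearly independent in $J_n$ and hence minimal by Nakayama.

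For part (3), Corollary~\ref{linq}(2) with $P=D_\V$ gives
$$\sum_{i\geq 0}\beta_i(J)z^i=\sum_{a\in D_\V}(1+z)^{|a|-n}.$$
The translation $a\mapsto x=a-(1,\dots,1)$ is a bijection $D_\V\to\PM(\V)^*$ under which $|a|-n=|x|$, and grouping terms by $|x|=i$ yields $\sum_{i\geq 0}\gamma_i(\V)(1+z)^i$. Part (4) then follows by reading off the largest $i$ with $\beta_i(J)\neq 0$: this is $\max\{|x|:x\in\PM(\V)^*\}=\rk_\V^*([n])$ by the description given right after Proposition~\ref{DilTrun}, whose minimum-over-partitions expression is the defining formula for the Dilworth truncation.

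No real obstacle is expected, since the corollary is essentially a bookkeeping consequence of earlier results; the one item requiring care is the translation $a\leftrightarrow x$ together with the check that $D_\V$ is indeed a poset ideal of $D$.
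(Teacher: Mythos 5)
Your proof is correct and follows precisely the route the paper has in mind: the paper presents Corollary~\ref{linq1} with no written proof, introducing it only as ``the special case'' of Corollary~\ref{essential} (giving $J = J_{D_\V}$) and Corollary~\ref{linq} (applied to $P = D_\V$), which is exactly the specialization you carry out. Your added checks --- that $D_\V$ is a poset ideal, that the generating set is minimal because $f_a \notin I_b$ when $b = a-(1,\dots,1) \in \PM(\V)^*$ by Proposition~\ref{notsub}, the bijection $a \mapsto a-(1,\dots,1)$ between $D_\V$ and $\PM(\V)^*$, and reading off $\pd J = \max\{|x| : x \in \PM(\V)^*\} = \rk_\V^*([n])$ from the positivity of the coefficients --- are exactly the bookkeeping the authors leave implicit, and all of it is sound.
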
 

\section{Irredundant primary decomposition and stability of associated primes} 
We keep the notation of the previous section. As noted earlier, the key ingredient in proving $\reg(J)=n$ was a description given in \cite{CH} of a (possibly  redundant) primary decomposition of $J$, i.e. 
$$ J = \bigcap_{\emptyset \neq A \subseteq [n]} I_A ^{\# A}$$ 
where for $A \subseteq [n]$ we have set  $I_A = \sum_{i \in A} I_i$. 
Here one notes that $I_A$ is an ideal generated by linear forms and hence prime with primary powers. 
The first reason why the decomposition can  be redundant is that different components might have the same radical. We consider the set of the so-called flats of the polymatroid $P(\V)$, i.e. 
$$F(\V)=\{  B\subseteq [n] :  \rk_\V(B)<\rk_\V(A) \mbox{ for all } 
B\subsetneq A \subseteq [n]  \}$$ and observe that  if $A\subseteq  [n]$  and $B$ is its closure, i.e. 
$B=\{ i : \rk_\V(A)=\rk_\V(A\cup \{i\} )\}\in F(\V)$  then $I_A ^{\# A}\supseteq   I_B ^{\# B}$. Hence 
$$J = \bigcap_{B \in F(\V)} I_B^{\# B}$$
is still a primary decomposition and now the  radicals of the components are  distinct. 
To get an irredundant primary decomposition it is now enough to identify   for which $B \in F(\V)$ the prime  ideal  $I_B$ is  associated to $J$. 

\begin{proposition}
\label{asso1} 
For $B\in F(\V)$ we have that the prime ideal $I_{B}$ is associated to $J$  if and only if $\rk_{\V}^*(B) = \rk_{\V}(B) - 1$. 
\end{proposition}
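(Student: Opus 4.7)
The plan is to reduce the question to the sub-arrangement $\V_B = (V_i)_{i \in B}$ living in the polynomial subring $S_B = \Sym_K(V_B) \subseteq S$, where $V_B = \sum_{i \in B} V_i$, and then to combine Corollary \ref{linq1}(4) with the Auslander--Buchsbaum formula.

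Write $L := \prod_{i \in B} I_i$. First I would show that $I_B \in \Ass_S(S/J)$ if and only if $I_B \in \Ass_S(S/L)$. Indeed, since $B$ is a flat, $\rk_\V(B \cup \{i\}) > \rk_\V(B)$ for every $i \notin B$, so $V_i \not\subseteq V_B$; any linear form in $V_i \setminus V_B$ lies outside the prime $I_B$ and hence is a unit in $S_{I_B}$, giving $I_i S_{I_B} = S_{I_B}$. Therefore $J S_{I_B} = L S_{I_B}$, and the equivalence follows from the compatibility of associated primes with localization.

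Next, extending a basis of $V_B$ to a basis of $V$ presents $S$ as a polynomial ring $S_B[y_1,\dots,y_{d-\rk_\V(B)}]$, in particular as a free (hence faithfully flat) $S_B$-algebra. Writing $I_i^B = V_i \cdot S_B$ for $i \in B$ and $L_B := \prod_{i \in B} I_i^B \subseteq S_B$, we have $L = L_B \cdot S$ and $S/L \cong (S_B/L_B) \otimes_{S_B} S$. The arrangement $\V_B$ with the restricted bases still satisfies Assumption \ref{assum}, and $\rk_{\V_B}^*(B) = \rk_\V^*(B)$ because the Dilworth truncation at $B$ depends only on partitions of $B$. Tensoring a minimal graded $S_B$-free resolution of $S_B/L_B$ with $S$ preserves minimality, so by Corollary \ref{linq1}(4) applied to $\V_B$,
$$\pd_S(S/L) = \pd_{S_B}(S_B/L_B) = \rk_\V^*(B) + 1.$$

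Auslander--Buchsbaum in the polynomial ring $S_B$ of Krull dimension $\rk_\V(B)$ now gives $\depth_{S_B}(S_B/L_B) = \rk_\V(B) - \rk_\V^*(B) - 1$, hence the irrelevant maximal ideal $\m_B$ of $S_B$ lies in $\Ass_{S_B}(S_B/L_B)$ if and only if $\rk_\V^*(B) = \rk_\V(B) - 1$. Finally, the standard flat base-change formula for associated primes $\Ass_S(M \otimes_{S_B} S) = \bigcup_{\mathfrak{p} \in \Ass_{S_B}(M)} \Ass_S(S/\mathfrak{p} S)$, combined with the fact that $\m_B S = I_B$ is prime (so $\Ass_S(S/I_B) = \{I_B\}$), yields $I_B \in \Ass_S(S/L)$ if and only if $\m_B \in \Ass_{S_B}(S_B/L_B)$, closing the chain of equivalences. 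The principal technical obstacle is the descent from $\V$ to $\V_B$---verifying the freeness of $S$ over $S_B$, the inheritance of Assumption \ref{assum}, and the invariance $\rk_{\V_B}^*(B) = \rk_\V^*(B)$; once these are in place, Auslander--Buchsbaum and flat base change complete the argument.
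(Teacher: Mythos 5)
Your proof is correct and follows essentially the same strategy as the paper's: localize at $I_B$ to replace $J$ by $\prod_{i\in B} I_i$, then combine Corollary \ref{linq1}(4) with Auslander--Buchsbaum to characterize when the relevant maximal ideal is associated. The only difference is that you make explicit the descent to the polynomial subring $S_B$ (freeness, preservation of Assumption \ref{assum}, invariance of $\rk^*$, and flat base change for $\Ass$), steps the paper compresses into the single sentence ``so we may assume right away that $B=[n]$ and $I_{[n]}$ is the graded maximal ideal of $S$.''
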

\begin{proof}
 Set $P=I_B$.  Since $B\in F(\V)$, we have that  $JS_P = \prod_{i \in B} I_i S_P$. We have that $P$ is associated to $J$ if and only if  $P$  is associated to  $\prod_{i \in B} I_i $.  So we may assume right away that $B=[n]$ and $I_{[n]}$ is the graded maximal ideal of $S$.  
By part (4) of Corollary \ref{linq1} we have $\pd J= \rk^*_{\V}([n])$ and by the Auslander-Buchsbaum formula 
$\pd J=\rk_{\V}([n])-1$ if and only if  $I_{[n]} \in \Ass(S/J)$. Hence $I_{[n]} \in \Ass(S/J)$ if and only if $\rk^*_{\V}([n]) = \rk_{\V}([n])-1$. 
\end{proof} 

Summing up we have: 

\begin{theorem}
\label{primdecasso}  An   irredundant primary decomposition of $J$ is given by 
$$ J = \bigcap_{B} I_{B}^{\# B}$$ 
where $B$ varies in the set $\{ B\in F(\V) : \rk_{\V}^*(B) = \rk_{\V}(B)-1\}$. In particular, 
$$\Ass(S/J)=\{ I_B :  B\in F(\V) \mbox{ and } \rk_{\V}^*(B) = \rk_{\V}(B)-1   \}.$$
\end{theorem}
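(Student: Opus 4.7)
The plan is to combine two pieces already in place: the possibly redundant primary decomposition $J = \bigcap_{B \in F(\V)} I_B^{\# B}$ recalled just before Proposition \ref{asso1}, and the criterion of Proposition \ref{asso1} for when $I_B$ is associated to $J$. A preliminary check I would carry out first is that $B \mapsto I_B$ is injective on $F(\V)$, so that the decomposition really has pairwise distinct radicals. Indeed, if $I_{B_1} = I_{B_2}$ then $\sum_{i \in B_1} V_i = \sum_{i \in B_2} V_i$, so $\rk_\V(B_1 \cup B_2) = \rk_\V(B_1)$; the flat condition on $B_1$ then forces $B_1 \cup B_2 = B_1$, and by symmetry $B_1 = B_2$.

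Next I invoke the following standard fact: when the radicals $P_i = \sqrt{Q_i}$ in a primary decomposition $J = \bigcap_i Q_i$ are pairwise distinct and $P_{i_0} \notin \Ass(S/J)$, the component $Q_{i_0}$ may be deleted. To see this, set $J' = \bigcap_{j \neq i_0} Q_j$; the natural inclusion $J'/J = J'/(J' \cap Q_{i_0}) \hookrightarrow S/Q_{i_0}$ gives $\Ass(J'/J) \subseteq \Ass(S/Q_{i_0}) = \{P_{i_0}\}$, and since also $\Ass(J'/J) \subseteq \Ass(S/J)$, the hypothesis forces $\Ass(J'/J) = \emptyset$, hence $J' = J$. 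Applying this iteratively to $\bigcap_{B \in F(\V)} I_B^{\# B}$ lets me delete every component whose radical is not an associated prime of $S/J$.

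Then Proposition \ref{asso1} translates the surviving condition $I_B \in \Ass(S/J)$ into $\rk_\V^*(B) = \rk_\V(B) - 1$, which is exactly the indexing set in the theorem. Irredundancy of the resulting decomposition is automatic: each surviving radical lies in $\Ass(S/J)$ and hence must appear in any primary decomposition of $J$, and by distinctness it appears exactly once, so no surviving component can be removed. The description of $\Ass(S/J)$ then follows, since in any irredundant primary decomposition with pairwise distinct radicals the set of radicals coincides with $\Ass(S/J)$.

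The main obstacle is effectively absent at this stage: the substantive content has already been captured by Proposition \ref{asso1} (which rests on the projective dimension formula of Corollary \ref{linq1}(4) together with Auslander--Buchsbaum) and by the flat-indexed primary decomposition borrowed from \cite{CH}. The theorem is a packaging step, and the only small technical point — injectivity of $B \mapsto I_B$ on $F(\V)$ — is dispatched by the closure characterization of flats as in the first paragraph.
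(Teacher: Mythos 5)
Your proof is correct and follows essentially the same route as the paper: start from the flat-indexed primary decomposition $J = \bigcap_{B \in F(\V)} I_B^{\# B}$ and delete the components whose radicals are not associated primes, with Proposition~\ref{asso1} supplying the exact criterion. The paper's proof is extremely terse (one sentence), whereas you helpfully spell out the two points the paper leaves implicit — that $B \mapsto I_B$ is injective on flats, and the standard deletion lemma for non-associated components — but these are elaborations, not a different argument.
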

\begin{proof}
To obtain an irredundant primary decomposition of $J$ it is enough to remove from the possibly redundant  primary decomposition $J = \bigcap_{B \in F(\V)} I_B^{\# B}$ the components not corresponding to associated primes. Hence by \ref{asso1} we get the  irredundant primary decomposition described in the statement. The assertion about the associated primes  in then an immediate consequence. 
\end{proof} 

\begin{corollary}
Suppose that the  subspace arrangement $\V=V_1,\dots, V_n$ of $V$   is linearly general, i.e. 
$\dim \sum_{i\in A} V_i  =\min\{  \sum_{i\in A} d_i,  d\}$  for all $A\subseteq  [n]$ where $d_i=\dim V_i$ and $d=\dim V$.  Assume $n>1$ and $d_i<d$ for all $i$  and set $I_i=(V_i)$.  We have  $\Pi_{i=1}^n I_i=\cap_{i=1}^n I_i$ if and only if $d_1+d_2+\dots+d_n < d+n-1$. 
\end{corollary}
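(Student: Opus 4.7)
The plan is to combine Theorem \ref{primdecasso} with an explicit computation of the associated primes of $J$ under linear generality. First I would reduce the statement to one about $\Ass(S/J)$. Under the hypotheses $d_i<d$ and linear generality one has $\dim(V_i\cap V_j)=\max\{0,d_i+d_j-d\}<d_i$ for $i\ne j$, so the prime ideals $I_1,\dots,I_n$ are pairwise incomparable. Since each $I_i$ contains $J$ and is prime, the equality $J=\bigcap_i I_i$ is equivalent to saying that this intersection is an irredundant primary decomposition of $J$; by Theorem \ref{primdecasso} this is in turn equivalent to $\Ass(S/J)=\{I_1,\dots,I_n\}$, noting that the primary component in Theorem \ref{primdecasso} attached to a singleton flat $B=\{i\}$ is simply $I_i$ itself (as $\#\{i\}=1$).

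Next I would catalogue the flats and identify the always-associated primes. Under linear generality $\rk_\V(A)=\min\{\sum_{i\in A}d_i,d\}$, so a proper subset $B\subsetneq [n]$ lies in $F(\V)$ iff $\sum_{i\in B}d_i<d$, while $[n]$ is trivially a flat. Each singleton $\{i\}$ is a flat (because $d_i<d$) and $\rk_\V^*(\{i\})=d_i-1=\rk_\V(\{i\})-1$, so each $I_i$ belongs to $\Ass(S/J)$. For a proper flat $B$ with $|B|\ge 2$, the condition $\sum_B d_i<d$ forces $\rk_\V(B)=\sum_B d_i$, and partitioning $B$ into singletons yields $\rk_\V^*(B)\le \sum_B d_i-|B|<\rk_\V(B)-1$ because $|B|\ge 2$; such $I_B$ are therefore not associated.

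The core of the argument is to decide when $I_{[n]}$ is associated, and here I would prove the identity
$$\rk_\V^*([n])=\min\left\{\,\sum_i d_i-n,\;d-1\,\right\}.$$
The inequality $\le$ follows by evaluating the defining formula on the partition into singletons and on the trivial partition $\{[n]\}$ respectively. For the reverse inequality, given a partition $A_1,\dots,A_p$ of $[n]$ set $c_k=\sum_{j\in A_k}d_j$, so that $\rk_\V(A_k)=c_k$ if $c_k\le d$ (small part) and $\rk_\V(A_k)=d$ if $c_k>d$ (large part). If every part is small, then $\sum_k\rk_\V(A_k)-p=\sum_i d_i-p\ge \sum_i d_i-n$. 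If at least one part is large, let $K$ index the small parts; since $c_k\ge 1$ we obtain
$$\sum_k\rk_\V(A_k)-p\ge |K|+(p-|K|)\,d-p=(p-|K|)(d-1)\ge d-1,$$
using $p-|K|\ge 1$. This partition case-analysis is the only real technical obstacle.

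Putting it together, Theorem \ref{primdecasso} gives $I_{[n]}\in\Ass(S/J)$ iff $\rk_\V^*([n])=\rk_\V([n])-1$. If $\sum_i d_i<d$ then $\rk_\V([n])=\sum_i d_i$ and the identity above gives $\rk_\V^*([n])=\sum_i d_i-n<\sum_i d_i-1$ (using $n>1$), so $I_{[n]}$ is not associated and the numerical condition $\sum_i d_i<d+n-1$ is automatic. If $\sum_i d_i\ge d$ then $\rk_\V([n])=d$ and $I_{[n]}$ is associated iff $\min\{\sum_i d_i-n,\,d-1\}=d-1$, equivalently $\sum_i d_i\ge d+n-1$. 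Combining with the previous paragraphs, $\Ass(S/J)=\{I_1,\dots,I_n\}$ iff $\sum_i d_i<d+n-1$, which by the reduction of the first paragraph completes the proof.
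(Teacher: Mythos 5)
Your proof is correct, and it arrives at the same numerical crux as the paper (whether $I_{[n]}$ is an associated prime, decided by comparing $\rk_\V^*([n])$ to $\rk_\V([n])-1$ via Theorem \ref{primdecasso}), but it reaches that crux along a genuinely more self-contained route. The paper simply cites the primary decomposition $J=\bigcap_{i} I_i\cap\m^n$ for linearly general arrangements already recorded in \cite{CH}, which instantly reduces the question to whether $\m$ is associated; it then verifies the equivalence $\sum d_i<d+n-1\Leftrightarrow\rk_\V^*([n])<d-1$ by a short partition case-analysis. You instead rederive the shape of the irredundant decomposition entirely from Theorem \ref{primdecasso}: you classify the flats under linear generality, observe that singletons are always associated, rule out proper flats of size at least two by comparing with the singleton partition, and then prove the clean closed form $\rk_\V^*([n])=\min\{\sum d_i-n,\,d-1\}$. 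This is more work but buys two things: it does not rely on the external \cite{CH} decomposition, and the explicit formula for $\rk_\V^*([n])$ is a sharper intermediate statement than the paper's one-directional inequality. Your small/large-part estimate $\sum_k\rk_\V(A_k)-p\ge(p-|K|)(d-1)$ is the same mechanism the paper uses implicitly when it says a part of rank $d$ ``contradicts the assumption,'' just made explicit.
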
 
\begin{proof}  If $d_1+d_2+\dots+d_n\leq d$ then the assertion is obvious. So we may assume $d_1+d_2+\dots+d_n> d$. In particular, $I_{[n]}$ is the maximal ideal $\m$ of $S$ and $\rk_{\V}([n])=d$.   It has been already observed in \cite{CH} that for a linearly general subspace arrangement a  primary decomposition of the product ideal $J$ is given by $J=\cap_{i=1}^n I_i \cap \m^n$.  Therefore we have that $J=\cap_{i=1}^n I_i$ if and only if $\m$ is not associated to $J$. In view of the characterization given in \ref{primdecasso}, the latter is equivalent to  $\rk_{\V}^*([n]) < \rk_{\V}([n])-1=d-1$, that is, 
$\sum_{i=1}^p \rk_{\V}(A_i)-p< d-1$ for some  partition $A_1,\dots, A_p$ of $[n]$. 
Summing up, we have to prove that the  following conditions are equivalent: 
\begin{itemize} 
\item[(1)] $d_1+d_2+\dots+d_n < d+n-1$ 
\item[(2)]  $\sum_{i=1}^p \rk_{\V}(A_i)-p< d-1$ for some  partition $A_1,\dots, A_p$ of $[n]$. 
\end{itemize} 
That (1) implies (2) is clear, just take $p=n$ and  $A_i=\{i\}$. Vice versa, let $A_1,\dots, A_p$ be a partition  of $[n]$ such that $\sum_{i=1}^p \rk_{\V}(A_i)-p< d-1$. If $\sum_{j\in A_v} d_j\geq d$ for some $v$ one has $\rk_{\V}(A_v)=d$, contradicting the assumption.  Hence $\sum_{j\in A_i} d_j< d$ for all $i$. By assumption this implies that $\rk_{\V}(A_i)=\sum_{j\in A_i} d_j$ for all $i$. It follows that $d_1+d_2+\dots+d_n=\sum_{i=1}^p \rk_{\V}(A_i)<d-1+p\leq d-1+n$ as desired. 
\end{proof}

Now we turn our attention to the properties of the powers $J^\nuu$ of the ideal $J$ with $\nuu>0$.  Clearly $J^\nuu$ is associated to  the subspace arrangement $\V^\nuu=\{ V_{ij} :   (i,j) \in [n]\times [\nuu]\}$ with $V_{ij}=V_i$ for all $j$.   The  polymatroids and rank functions associated to $\V^\nuu$ are very tightly related to those of $\V$ as we now explain. Since $\V^\nuu$ is indexed  on $[n]\times [\nuu]$  the domain of   the  associated rank function  $\rk_{\V^\nuu}$ is $2^{[n]\times [\nuu]}$.  Let $\pi: [n]\times [\nuu] \to [n]$  be the projection on the first coordinate. We have: 

\begin{lemma}
\label{simple} 
For every subset  $A\subseteq [n]\times [\nuu]$ we have 
 $$\rk_{\V^\nuu}(A)=\rk_{\V}(\pi (A))=\rk_{\V^\nuu}(\pi^{-1}\pi (A))$$
 and 
 $$\rk^*_{\V^\nuu}(A)=\rk^*_{\V}(\pi (A))=\rk^*_{\V^\nuu}(\pi^{-1}\pi (A)).$$
 \end{lemma}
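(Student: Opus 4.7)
My plan is to reduce every equality in the statement to the elementary observation that, since $V_{ij}=V_i$ does not depend on $j$, one has $\sum_{(i,j)\in A} V_{ij}=\sum_{i\in \pi(A)} V_i$ for every $A\subseteq [n]\times [\nuu]$. Taking $K$-dimensions immediately yields $\rk_{\V^\nuu}(A)=\rk_{\V}(\pi(A))$, and since $\pi\circ\pi^{-1}\circ\pi=\pi$, both $A$ and $\pi^{-1}\pi(A)$ share the same projection and hence the same rank; this settles the first chain, and the analogous reduction (once the Dilworth statement is proved) will handle the second. So the real content is to prove $\rk^*_{\V^\nuu}(A)=\rk^*_{\V}(\pi(A))$.

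The inequality $\leq$ is a straightforward lifting: given any partition $B_1,\dots,B_q$ of $\pi(A)$, set $A_j:=A\cap \pi^{-1}(B_j)$; then $A_1,\dots,A_q$ is a partition of $A$ with $\pi(A_j)=B_j$, so the value $\sum_j \rk_{\V}(B_j)-q$ is realised by a partition of $A$, giving $\rk^*_{\V^\nuu}(A)\leq \rk^*_{\V}(\pi(A))$. For the converse $\geq$ I would take any partition $A_1,\dots,A_p$ of $A$ and argue that its projections may be assumed pairwise disjoint by a submodular merging step: if $\pi(A_1)\cap \pi(A_2)\neq \emptyset$, replacing $A_1,A_2$ by $A_1\cup A_2$ changes the objective value by $\rk_{\V}(\pi(A_1)\cup \pi(A_2))-\rk_{\V}(\pi(A_1))-\rk_{\V}(\pi(A_2))+1$, which by submodularity of $\rk_{\V}$ together with $\rk_{\V}(\pi(A_1)\cap \pi(A_2))\geq 1$ (each $V_k\neq 0$, so the rank of a non-empty subset of $[n]$ is strictly positive) is $\leq 0$. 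Iterating such merges decreases $p$ and so terminates, yielding a partition whose projections form a partition of $\pi(A)$ of no larger objective value; that value is then a candidate for the minimum defining $\rk^*_{\V}(\pi(A))$.

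The main obstacle is precisely this $\geq$ direction: a priori an $\nuu$-fold partition of $A$ could outperform every honest partition of $\pi(A)$ by exploiting the multiple copies sitting over each index, and the submodular merging step is exactly what rules this out; it uses in an essential way that every $V_k$ is non-zero. Everything else reduces mechanically to the fact that all subspaces in a fibre $\pi^{-1}(i)$ span the same $V_i$, so both $\rk_\V$ and its Dilworth truncation are insensitive to the particular copy chosen and to repetitions, which is what makes the middle equalities $\rk_{\V^\nuu}(A)=\rk_{\V^\nuu}(\pi^{-1}\pi(A))$ and their starred counterparts automatic.
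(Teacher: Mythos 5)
Your proof is correct and follows the same four-step architecture as the paper: (i) the unstarred equality $\rk_{\V^\nuu}(A)=\rk_\V(\pi(A))$ is immediate from $V_{ij}=V_i$; (ii) the middle equalities reduce to the fact that $A$ and $\pi^{-1}\pi(A)$ have the same projection; (iii) the inequality $\rk^*_{\V^\nuu}(A)\leq\rk^*_\V(\pi(A))$ is obtained by lifting a partition of $\pi(A)$ to $A$ via $A_j=A\cap\pi^{-1}(B_j)$; (iv) the converse inequality is obtained by modifying an optimal partition of $A$ until the projections become pairwise disjoint. You diverge from the paper only in step (iv). The paper never merges whole parts; it moves the single fibre over the offending index $k$ from $A_j$ into $A_i$, which leaves $\pi(A_i)$ unchanged and strictly shrinks $\pi(A_j)$, so the objective value does not increase by \emph{monotonicity} of $\rk_\V$ alone (with $V_k\neq 0$ invoked only when $A_j$ becomes empty and is deleted); termination requires a secondary potential argument since the number of parts may stay the same. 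You instead merge $A_i\cup A_j$ into one part and invoke \emph{submodularity} of $\rk_\V$ together with $\rk_\V(\pi(A_i)\cap\pi(A_j))\geq 1$ to bound the change; this gives immediate termination because the number of parts strictly decreases at every step. Your version is slightly cleaner (one fewer case, termination is free), at the cost of using the submodular inequality rather than mere monotonicity; both uses of $V_k\neq 0$ are consistent with the paper's standing hypothesis that the $V_i$ are non-zero subspaces. Either route is perfectly sound, and your explicit remark about why $\geq$ is the real content — an $\nuu$-fold partition could a priori beat any partition of $\pi(A)$ by exploiting repeated copies — is a good account of the point of the lemma.
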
 
 \begin{proof} For the first assertion one observes that 
 $$\rk_{\V^\nuu}(A)=\dim \sum_{(i,j)\in A} V_{i,j}=\dim \sum_{i\in \pi(A) } V_{i}=\rk_{\V}(\pi(A)).$$ 
 For the second, set $\nu=\rk^*_{\V^\nuu}(A)$ and let $A_1,\dots, A_p$ be a partition of $A$ such that 
 $\nu=\sum_{c=1}^p \rk_{\V^\nuu}(A_c)-p$. If for some $i,j$ one has $\pi(A_i)\cap \pi(A_j)\neq \emptyset$ then let $k\in \pi(A_i)\cap \pi(A_j)$. Let $A_1',\dots, A_q'$ be obtained from $A_1,\dots, A_p$ by replacing  $A_i$ with $A_i\cup \{ (a,b)\in A_j : a=k\}$ and $A_j$ with $ \{ (a,b)\in A_j : a\neq k\}$ if  $\{ (a,b)\in A_j : a\neq k\}\neq \emptyset $ (in this case $q=p$), or simply by removing $A_j$ if $ \{ (a,b)\in A_j : a\neq k\}=\emptyset $ (and in this case $q=p-1$). One can check that the new partition satisfies $\sum_{c=1}^q \rk_{\V^\nuu}(A'_c)-q\leq \nu$ and hence  $\sum_{c=1}^q \rk_{\V^\nuu}(A'_c)-q=\nu$. We may repeat the process until we obtain a partition $A_1,\dots, A_s$  of $A$ such that   $\nu=\sum_{c=1}^s \rk_{\V^\nuu}(A_c)-s$  and $\pi(A_i)\cap \pi(A_j)= \emptyset$ for every $i\neq j$. Then $\pi(A_1),\dots, \pi(A_s)$ is a partition of $\pi(A)$ and 
 $\rk^*_{\V^\nuu}(A)=\nu=\sum_{c=1}^s \rk_{\V^\nuu}(A_c)-s=\sum_{c=1}^s \rk_{\V}(\pi(A_c))-s\geq \rk^*_\V(\pi(A))$.  
 Vice versa if $B_1,\dots, B_s$ is a partition on $\pi(A)$ such that $\sum_{c=1}^s \rk_{\V}(B_c)-s=\rk^*_\V(\pi(A))$ then with $A_i=A\cap \pi^{-1}(B_i)$ one gets a partition $A_1,\dots, A_s$ of $A$ such that 
 $\rk^*_{\V^\nuu}(A) \leq \sum_{c=1}^s \rk_{\V^\nuu}(A_c)-s=\rk^*_\V(\pi(A))$. 
\end{proof} 

We obtain: 

\begin{theorem}  
\label{powersofJ}
For every $\nuu>0$ we have: 
\begin{itemize}
\item[(a)] $\pd J = \pd J^\nuu$, 
\item[(b)]$\Ass(S/J)=\Ass(S/J^\nuu)$,
\item[(c)] an irredundant primary decomposition of  $J^\nuu$ is obtained by raising to power $\nuu$ the components in the irredundant primary decomposition of  $J$ described in \ref{primdecasso}, i.e. 
$$J^\nuu  = \bigcap_{B} I_{B}^{\nuu \# B}$$
where $B\in F(\V)$ and  $\rk_{\V}^*(B) = \rk_{\V}(B)-1$.
\end{itemize} 
\end{theorem}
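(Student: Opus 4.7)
The plan is to view $J^\nuu$ as the product ideal attached to the inflated arrangement $\V^\nuu=\{V_{ij}:(i,j)\in[n]\times[\nuu]\}$ with $V_{ij}=V_i$, so that Corollary \ref{linq1} and Theorem \ref{primdecasso} apply verbatim with $\V^\nuu$ in place of $\V$. All relevant invariants of $\V^\nuu$ are then traded for invariants of $\V$ via the projection $\pi$ of Lemma \ref{simple}.

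Part (a) is immediate: Corollary \ref{linq1}(4) gives $\pd J^\nuu=\rk^*_{\V^\nuu}([n]\times[\nuu])$, and Lemma \ref{simple} identifies this with $\rk^*_\V([n])=\pd J$. For (b) and (c) the heart of the matter is to show that $B\mapsto \pi^{-1}(B)$ is a bijection $F(\V)\to F(\V^\nuu)$ under which the ``associated'' condition $\rk^*=\rk-1$ is preserved. For the forward direction, if $B\in F(\V)$ and $C:=\pi^{-1}(B)\subsetneq A$, pick $(i,j)\in A\setminus C$; then $i\notin B$, so $\pi(A)\supsetneq B$, and by the flat property of $B$ together with Lemma \ref{simple}, $\rk_{\V^\nuu}(A)=\rk_\V(\pi(A))>\rk_\V(B)=\rk_{\V^\nuu}(C)$. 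Conversely, if $C\in F(\V^\nuu)$ and $(i,j)\notin C$ with $i\in\pi(C)$, then $\pi(C\cup\{(i,j)\})=\pi(C)$ forces $\rk_{\V^\nuu}(C\cup\{(i,j)\})=\rk_{\V^\nuu}(C)$, contradicting the flat property; hence $C=\pi^{-1}(\pi(C))$, and the flat property for the remaining inclusions forces $\pi(C)\in F(\V)$. Under this bijection one has $\#\pi^{-1}(B)=\nuu\cdot\#B$ and $I_{\pi^{-1}(B)}=I_B$, and by Lemma \ref{simple} the condition $\rk^*_{\V^\nuu}(\pi^{-1}(B))=\rk_{\V^\nuu}(\pi^{-1}(B))-1$ is equivalent to $\rk^*_\V(B)=\rk_\V(B)-1$.

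Applying Theorem \ref{primdecasso} to $\V^\nuu$ therefore gives an irredundant primary decomposition $J^\nuu=\bigcap_C I_C^{\#C}$ indexed by associated flats $C\in F(\V^\nuu)$, and translating via the bijection yields the formula in (c); taking radicals then gives (b). The only genuine obstacle is establishing the flat bijection above, which is essentially bookkeeping once Lemma \ref{simple} is in hand.
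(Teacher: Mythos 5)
Your proof is correct and follows essentially the same approach as the paper: both prove (a) via Corollary~\ref{linq1}(4) and Lemma~\ref{simple}, and both prove (b) and (c) by applying Theorem~\ref{primdecasso} to the inflated arrangement $\V^\nuu$ and establishing, via Lemma~\ref{simple}, that $B\mapsto\pi^{-1}(B)$ is a bijection between the relevant flats of $\V$ and $\V^\nuu$ preserving the condition $\rk^*=\rk-1$. You merely spell out in more detail the flat bijection argument that the paper states more tersely.
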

\begin{proof} (a) By \ref{linq1}(4)   $\pd J=\rk^*_\V([n])$ and $\pd J^\nuu=\rk^*_{\V^\nuu}([n]\times [\nuu])$ and by \ref{simple}  $\rk^*_\V([n])=\rk^*_{\V^\nuu}([n]\times [\nuu])$. 

Assertions (b) and (c):  by \ref{primdecasso}  the associated primes of  $J^\nuu$ arise form subsets $C\subseteq [n]\times [\nuu]$ such that 
$\rk^*_{\V^\nuu}(C)=\rk_{\V^\nuu}(C)-1$ and $C\in F(\V^\nuu)$, i.e.~$\rk_{\V^\nuu}(C)<\rk_{\V^\nuu}(A)$ for all $C\subsetneq A$. 
The second condition together with \ref{simple} implies that  $C=\pi^{-1}(B)$ with $B=\pi(C)$. But then, again by \ref{simple},  $\rk^*_{\V^\nuu}(C)=\rk_{\V^\nuu}(C)-1$  is equivalent to $\rk^*_{\V}(B)=\rk_{\V}(B)-1$. Summing up, $F(\V^\nuu)=\{ \pi^{-1}(B) : B \in F(\V)\}$ and hence the associated primes of $J^\nuu$ are exactly the associated primes of $J$.  The assertion concerning the primary decomposition follows immediately since  $ \# \pi^{-1}(B)=\nuu \# B$. 
\end{proof} 

The established relations \ref{simple} among the rank functions translate immediately to the following relation  involving  the associated polymatroids: 

\begin{proposition}
\label{polymatv}
For every $\nuu$ we have: 
$$P(\V^\nuu)^*=\left\{ (x_{ij} ) \in \NN^{[n]\times [\nuu]} :  \left(\sum_{j \in [\nuu]} x_{1j}, \dots, \sum_{j \in [\nuu]} x_{nj}\right) \in P(\V)^*\right\}.$$
\end{proposition}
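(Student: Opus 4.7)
The plan is to read both containments off the relation $\rk^*_{\V^\nuu}(A) = \rk^*_\V(\pi(A)) = \rk^*_{\V^\nuu}(\pi^{-1}\pi(A))$ from Lemma \ref{simple}; the coordinate change $y_i = \sum_{j \in [\nuu]} x_{ij}$ translates the rank-function equality into the desired polymatroid equality.

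For the forward inclusion, assume $x \in P(\V^\nuu)^*$ and fix a nonempty $B \subseteq [n]$. Apply the defining inequality of $P(\V^\nuu)^*$ to the fiber $A = \pi^{-1}(B)$:
$$\sum_{i \in B} y_i \;=\; \sum_{(i,j) \in \pi^{-1}(B)} x_{ij} \;\leq\; \rk^*_{\V^\nuu}(\pi^{-1}(B)) \;=\; \rk^*_\V(B),$$
where the last equality is precisely Lemma \ref{simple}. Since $B$ was arbitrary, $y \in P(\V)^*$.

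For the reverse inclusion, assume $y \in P(\V)^*$ and let $\emptyset \neq A \subseteq [n]\times[\nuu]$. Using nonnegativity of the entries $x_{ij}$ together with the inclusion $A \subseteq \pi^{-1}(\pi(A))$, and then the assumption on $y$ applied to $B = \pi(A)$, we get
$$\sum_{(i,j) \in A} x_{ij} \;\leq\; \sum_{(i,j) \in \pi^{-1}(\pi(A))} x_{ij} \;=\; \sum_{i \in \pi(A)} y_i \;\leq\; \rk^*_\V(\pi(A)) \;=\; \rk^*_{\V^\nuu}(A),$$
invoking Lemma \ref{simple} once more at the end. Hence $x \in P(\V^\nuu)^*$.

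There is no real obstacle here: the only nontrivial ingredient is Lemma \ref{simple}, which is already available, and the combinatorial translation between sums over $A$ and sums over $\pi^{-1}\pi(A)$ is immediate since the $V_{ij}$ depend only on $i$. In effect the proposition is the polymatroid-level reformulation of the rank-function identity, so the write-up should be kept short and the two inclusions presented symmetrically as above.
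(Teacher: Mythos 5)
Your proof is correct and is exactly the argument the paper leaves implicit when it says the polymatroid identity follows "immediately" from Lemma \ref{simple}: you pass to the Dilworth-truncated rank function characterization of $P(\V)^*$ and $P(\V^\nuu)^*$ (as stated after Proposition \ref{DilTrun}), then use $\rk^*_{\V^\nuu}(A)=\rk^*_\V(\pi(A))$ in both directions, together with nonnegativity of the coordinates for the reverse inclusion. Nothing is missing; this is the intended argument.
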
 

Since the Betti numbers can be expressed in terms of the points in $P(\V^\nuu)^*$,  using \ref{polymatv}  one can deduce a formula for the Betti numbers of $J^\nuu$ that just depends on $P(\V)^*$: 

\begin{corollary} For every $\nuu >0$ and every $i\geq 0$ one has: 
$$\beta_i(J^\nuu) =  \sum_{x \in P(\V)^*}  {|x|  \choose i}   \prod_{j=1}^n   {\nuu+x_j-1 \choose x_j} $$ 
\end{corollary}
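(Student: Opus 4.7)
The plan is to apply the Betti number formula of Corollary \ref{linq1}(3) to the arrangement $\V^\nuu$, and then translate the resulting sum over $P(\V^\nuu)^*$ into a sum over $P(\V)^*$ using Proposition \ref{polymatv}. The arrangement $\V^\nuu$ has $n\nuu$ subspaces, and the associated product ideal is precisely $J^\nuu$, so Corollary \ref{linq1}(3) applies directly.

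First, I would invoke Corollary \ref{linq1}(3) for $\V^\nuu$: substituting $a = y + (1,\dots,1)$ in the formula (with $n$ replaced by $n\nuu$) yields
$$\sum_{i \geq 0} \beta_i(J^\nuu) z^i = \sum_{y \in P(\V^\nuu)^*} (1+z)^{|y|}.$$

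Next, I would stratify the right-hand side according to the projection onto $P(\V)^*$ provided by Proposition \ref{polymatv}: each $y = (y_{ij}) \in P(\V^\nuu)^*$ determines $x \in P(\V)^*$ via $x_i = \sum_{j \in [\nuu]} y_{ij}$, and conversely every $x \in P(\V)^*$ arises this way. For fixed $x$, the fiber consists of tuples $(y_{ij})$ with $y_{ij} \in \NN$ and $\sum_j y_{ij} = x_i$ for each $i$; by stars and bars the number of such fibers is $\prod_{i=1}^n \binom{\nuu + x_i - 1}{x_i}$. Since $|y| = |x|$ along each fiber, grouping gives
$$\sum_{i \geq 0} \beta_i(J^\nuu) z^i = \sum_{x \in P(\V)^*} (1+z)^{|x|} \prod_{j=1}^n \binom{\nuu + x_j - 1}{x_j}.$$

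Finally, expanding $(1+z)^{|x|} = \sum_{i} \binom{|x|}{i} z^i$ and reading off the coefficient of $z^i$ yields the claimed formula. There is no real obstacle; the argument is essentially a bookkeeping exercise combining Corollary \ref{linq1}(3) with Proposition \ref{polymatv}. The only points that require a small amount of care are the shift by $(1,\ldots,1)$ between $D_\V$ and $P(\V)^*$ (and its analogue for $\V^\nuu$), and verifying that $|y|$ is constant on the fiber over $x$, which is immediate from the definition of the projection.
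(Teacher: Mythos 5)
Your proposal is correct and is precisely the argument the paper has in mind (the paper itself only gestures at it with the sentence preceding the corollary). You correctly apply Corollary \ref{linq1}(3) to $\V^\nuu$, translate $D_{\V^\nuu}$ to $P(\V^\nuu)^*$ by the shift $a = y + (1,\dots,1)$, fiber the sum over $P(\V)^*$ via Proposition \ref{polymatv} with the stars-and-bars count $\prod_i \binom{\nuu + x_i - 1}{x_i}$, observe $|y| = |x|$ on each fiber, and extract the coefficient of $z^i$.
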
 

\begin{remark} 
\label{propow} 
As a further generalization, instead of the powers $J^\nuu$ of $J=I_1I_2\cdots I_n$ one can consider a product of powers of the $I_i$'s, i.e,   $I_1^{u_1}\cdots I_n^{u_n}$ with $(u_1,\dots, u_n)\in \NN^n$ and the arguments we have presented extend immediately.  Assuming  $u_i>0$ for all $i$ one has: 
\begin{itemize} 
\item[(a)]    the results in  \ref{powersofJ} (a), (b), (c)  hold with the ideal $J^\nuu$ replaced by $I_1^{u_1}\cdots I_n^{u_n}$ and the exponent $\nuu \# B$ replaced by $\sum_{i\in B} u_i$.  
\item[(b)]  The polymatroid  associated to the subspace arrangement $\V^{(u_1,\dots, u_n)}= \{ V_{ij} \}$ with $V_{ij}=V_i$ for all $j\in [u_i]$ is: 
$$P(\V^{(u_1,\dots, u_n)})^*=\left\{ (x_{ij} ) \in \NN^{[u_1]\times\dots  \times [u_n]}  :  \left(\sum_{j \in [u_1]} x_{1j}, \dots, \sum_{j \in [u_n]} x_{nj}\right) \in P(\V)^*\right\}.$$
\item[(c)] The formula for the Betti numbers  is: 
$$\beta_i(I_1^{u_1}\cdots I_n^{u_n}) =  \sum_{x \in P(\V)^*}  {|x|  \choose i}   \prod_{j=1}^n   {u_j+x_j-1 \choose x_j}$$ 
\end{itemize} 
\end{remark} 

The case  $i=0$ of the  formula \ref{propow}(c)  deserves a special attention because  of its relation with the so-called multiview variety that arises in geometric computer vision. 
Let us recall from  \cite{AST, C, CS, CDG, Li}  that the subspace arrangement $\V$ defines a multiprojective variety whose coordinate ring can be identified with  the subring 
$$A=K[V_1y_1,\dots, V_ny_n]$$ of the Segre product  $K[x_iy_j : i=1,\dots,d   \mbox{ and }  j=1,\dots n]$. The ring $A$ is  $\ZZ^n$-graded by $\deg y_j=e_j\in \ZZ^n$. 
Given  $u=(u_1,\dots, u_n)\in \NN^n$, the $u$-th  homogeneous  component $A_u$ of $A$ is  $V_1^{u_1}\cdots V_n^{u_n}$ and its dimension equals to $\beta_0(I_1^{u_1}\cdots I_n^{u_n})$.  So we get a relatively simple   and new proof of an improved  version of the main result of \cite{Li}: 

\begin{theorem}
For every $u=(u_1,\dots, u_n)\in \NN^n$ the multigraded Hilbert function of the coordinate ring $A=K[V_1y_1,\dots, V_ny_n]$ of the multiview variety associated with the subspace arrangement $\V=\{V_1,\dots, V_n\}$ is given by: 
$$\dim_K A_{u}=\sum_{x \in P(\V)^*}    \prod_{j=1}^n   {u_j+x_j-1 \choose x_j} $$ 
In particular, the multidegree of $A$ is multiplicity free and supported on the maximal elements of the polymatroid $P(\V)^*$. 
\end{theorem}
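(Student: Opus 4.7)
The plan is to reduce the theorem directly to Remark~\ref{propow}(c) with $i=0$ and then read off the multidegree from the polymatroid structure of $P(\V)^*$. First, by the construction of the multiview algebra recalled just before the statement, the multigraded component $A_u$ is identified with the $K$-vector space $V_1^{u_1}\cdots V_n^{u_n}\subseteq S$. Since the ideal $I_1^{u_1}\cdots I_n^{u_n}$ is generated in the single degree $|u|$ by the elements of this vector space, its minimal number of generators equals $\dim_K V_1^{u_1}\cdots V_n^{u_n}$, and hence $\dim_K A_u=\beta_0(I_1^{u_1}\cdots I_n^{u_n})$.

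Next I would invoke Remark~\ref{propow}(c) with $i=0$, which gives
$$\beta_0(I_1^{u_1}\cdots I_n^{u_n})=\sum_{x\in P(\V)^*}\prod_{j=1}^n \binom{u_j+x_j-1}{x_j},$$
yielding the dimension formula immediately. All the non-trivial content of this step is already packaged in the earlier results of the paper: the linear quotient property, the Betti number formula of Corollary~\ref{linq1}, and the behaviour under repetition controlled by Lemma~\ref{simple}.

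For the claim on the multidegree, the key observation is that for each fixed $x\in P(\V)^*$ the summand $\prod_j \binom{u_j+x_j-1}{x_j}$ is a polynomial in $u$ of total degree $|x|$ whose top-degree homogeneous part is $\prod_j u_j^{x_j}/x_j!$. Hence the top-degree part of the Hilbert polynomial is a sum, over those $x\in P(\V)^*$ with $|x|$ maximal, of the monomials $\prod_j u_j^{x_j}$, each appearing with coefficient $1/\prod_j x_j!$, i.e.\ with multiplicity one in the standard normalisation. A routine polymatroid fact is that in $P(\V)^*$ the componentwise maximal elements coincide with the bases, i.e.\ with the $x$ achieving $|x|=\rk_\V^*([n])$: indeed, any non-basis $x$ admits some $y\in P(\V)^*$ with $y\ge x$ and $|y|>|x|$, while any basis is automatically componentwise maximal by comparing sums. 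Therefore the multidegree is multiplicity free and supported precisely on the componentwise maximal elements of $P(\V)^*$.

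The main obstacle is not computational but bookkeeping: one needs to keep straight the correspondence between the multigraded Hilbert polynomial, its top-degree part, and the combinatorial multidegree of a multiprojective coordinate ring, together with the equivalence between componentwise maximality and the basis property in a polymatroid. Once these identifications are in place, the theorem is a direct consequence of Remark~\ref{propow}(c).
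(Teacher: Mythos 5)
Your proposal is correct and follows the same route as the paper: identify $A_u$ with $V_1^{u_1}\cdots V_n^{u_n}$, observe $\dim_K A_u = \beta_0(I_1^{u_1}\cdots I_n^{u_n})$ since that ideal is generated in the single degree $|u|$, and apply Remark~\ref{propow}(c) with $i=0$. The paper states the multidegree claim without further comment; your analysis of the top-degree part of the Hilbert polynomial, combined with the standard polymatroid fact that componentwise-maximal elements coincide with the bases, correctly fills in that step.
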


\section{Resolution of $J_P$} 
For every subspace arrangement $V_1,\dots, V_n$ of dimension $(d_1,\dots, d_n)$ with a given collection of bases $f=\{ f_{ij} : i\in [n]  \mbox{ and } j\leq d_i\}$ satisfying Assumption \ref{assum} and for every poset ideal $P$ of $D=[d_1]\times \cdots \times [d_n]$ we have proved that the ideal $J_P$ has a linear resolution and that the Betti numbers are combinatorial invariants. Our goal is now to describe explicitly  a minimal free  resolution of $J_P$. We start with the ``generic" case. 

\subsection{Resolution of $J_P$:  the generic case} 
Assume firstly that, for the given $(d_1,\dots, d_n)$, the $V_i$'s  are as generic as possible. 
That is, we assume that there is a basis $\{ x_{ij} : i\in [n] \mbox{ and } j\in [d_i] \}$ of the ambient vector space such that $V_i$ is generated by $\{ x_{ij} :   j\in [d_i] \}$. Note that the collection of bases $
x=\{ x_{ij} : i\in [n] \mbox{ and } j\in [d_i] \}$  satisfy the  Assumption \ref{assum} and we will consider the ideals $J_P$ with respect to $x$.  In this case 
$$S=K[x_{ij} :  i\in [n] \mbox{ and } j\in [d_i] ].$$ The corresponding ideal $J$ is the product of  transversal ideals $I_i=( x_{ij}:  \, j\in [d_i]  )$ because each factor  uses a different set of variables.  Then the resolution of $J$ is given by the tensor product of the resolutions of the $I_i$'s, i.e. the (truncated) Koszul complex on the set $x_{ij}$ with $j\in [d_i]$. More explicitly, let $\K^{(i)}$ be the Koszul complex on $x_{ij}$ with $j\in [d_i]$ with the $0$-th component removed and homologically shifted so that 
$$\K^{(i)}_j=\wedge^{j+1} S^{d_i}.$$ 
This is sometimes called the first syzygy complex of the full Koszul complex. Denote by $e_{i1},\dots, e_{id_i}$ the canonical basis of  $S^{d_i}$. For every non-empty subset $A_i=\{j_1, j_2,\dots \}$ of $[d_i]$  with $j_1<j_2<\dots$ we have the corresponding basis element 
$e_{A_i}=e_{ij_1}\wedge e_{ij_2} \wedge \dots $ of $\K^{(i)}$ in homological degree $|A_i|-1$. Then 
$$\K=\K^{(d_1,\dots,d_n)}=\K^{(1)}\otimes \K^{(2)}\otimes \cdots \otimes \K^{(n)}$$
is the free resolution of $J=I_1\cdots I_n$. An $S$-basis of $\K$ can be described as follows. Let $A=(A_1,\dots, A_n)$ with $A_i$ a non-empty subset of $[d_i]$. Set 
$e_A=e_{A_1}\otimes   e_{A_2}\otimes \cdots \otimes   e_{A_n}\in \K$. Then the homological degree of $e_A$ is $\sum_{i=1}^n |A_i|-n$ and the set of all $e_A$'s form an $S$-basis of $\K$. The differential  $\partial_{\K}$  of $\K$ can be described  as follows: 
$$\partial_{\K}(e_A)=\sum_{i\in [n], |A_i|>1} \ \sum_{b\in A_i}  (-1)^{\sigma(i,b)} x_{ib} \ e_{A_1}\otimes \dots  \otimes  e_{A_i\setminus \{b\}} \otimes \cdots \otimes   e_{A_n}$$ 
where 
$$\sigma(i,b)=\sum_{j<i} (|A_j|-1)+|\{ c\in A_i : c<b\}|.$$

  For a given poset ideal $P$ of $D$ we define 

$$\K_P=\K^{(d_1,\dots,d_n)}_P=\oplus S e_A$$
where the sum is extended to all the $e_A$ such that $(\max(A_1), \dots, \max(A_n))\in P$.  
Clearly $\K_P$ is a subcomplex of $\K$ and $(\K_P)_0=\oplus_{a\in P}  S e_{1a_1} \otimes e_{2a_2} \cdots  \otimes e_{na_n}$ and our goal is to prove: 

\begin{theorem}
\label{resgen} 
The complex $\K_P$ is a minimal free resolution of $J_P$. 
\end{theorem}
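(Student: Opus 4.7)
The plan is to proceed by induction on $|P|$; the base case $P = \emptyset$ is vacuous since $\K_P = 0$ and $J_P = 0$. For the inductive step, choose a maximal element $a$ of $P$ and set $Q = P \setminus \{a\}$, which is again a poset ideal of $D$. First I would verify that $\K_P$ is a subcomplex of $\K$: the differential $\partial_{\K}$ either fixes $\max(A_i)$ (when the element $b \in A_i$ being removed is not the maximum) or strictly decreases it, so the tuple $(\max A_1, \ldots, \max A_n)$ only moves downward in $D$, and since $P$ is a poset ideal the result stays in $P$. The same observation applied to $Q$ yields a short exact sequence of complexes $0 \to \K_Q \to \K_P \to \K_P/\K_Q \to 0$.

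Next I would identify the quotient $\K_P/\K_Q$ explicitly. Its basis consists of those $e_A$ with $\max(A_i) = a_i$ for every $i$; writing $A_i = B_i \cup \{a_i\}$ with $B_i \subseteq [a_i-1]$, the terms of $\partial_{\K}(e_A)$ that would remove $a_i$ from $A_i$ land in $\K_Q$ and vanish in the quotient, while the surviving terms remove some $b \in B_i$ with coefficient $\pm x_{ib}$. Inspecting the sign function $\sigma(i,b)$ under this splitting shows that $\K_P/\K_Q$ is precisely the full (non-truncated) Koszul complex on the regular sequence of distinct variables $\{x_{ij} : i \in [n],\ j < a_i\}$, with the unique degree-$0$ basis element $e_{(\{a_1\},\ldots,\{a_n\})}$ augmented to $f_a$. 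This regular sequence generates the ideal $I_b$ for $b = a - (1,\ldots,1)$, so the augmented complex is a resolution of $Sf_a/I_b f_a$. Since in the generic case $f_a = x_{1,a_1}\cdots x_{n,a_n}$ clearly does not belong to $I_b$, Theorem \ref{main}(2) gives $J_Q:(f_a) = I_b$ and hence $J_P/J_Q \cong Sf_a/I_b f_a$. Therefore $\K_P/\K_Q$ is a free resolution of $J_P/J_Q$.

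By the inductive hypothesis, $\K_Q$ is a resolution of $J_Q$. The compatible augmentations of the three complexes assemble with the short exact sequence $0 \to J_Q \to J_P \to J_P/J_Q \to 0$, and the long exact sequence in homology then forces $\K_P$ to be a resolution of $J_P$. Minimality is automatic: every nonzero entry of $\partial_{\K}$, hence of $\partial_{\K_P}$, equals $\pm x_{ib}$ and so lies in the graded maximal ideal of $S$.

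The main technical obstacle I foresee is the explicit identification of $\K_P/\K_Q$ with the Koszul complex on $\{x_{ij} : j < a_i\}$: the homological grading matches immediately once $A_i$ is split as $B_i \cup \{a_i\}$, but one must check that the restriction of the sign function $\sigma(i,b)$ to indices $b < a_i$ produces exactly the sign convention of the tensor-product Koszul differential on the $B_i$'s. Once this identification is in place, everything else reduces to formal homological algebra.
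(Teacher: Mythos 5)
Your proof is correct, and it takes a genuinely different route from the paper. The paper proves Theorem~\ref{resgen} via Theorem~\ref{resgen1} by induction on $|P|$, but handles the inductive step with a Mayer--Vietoris decomposition: if $P$ has several maximal elements it writes $P=Q_1\cup Q_2$, uses the short exact sequence of complexes from Remark~\ref{remobv}(3), and then needs Lemma~\ref{interlemma} ($J_{Q_1\cap Q_2}=J_{Q_1}\cap J_{Q_2}$) to make the long exact sequence close up; the one-maximal-element case is handled separately as a tensor product of truncated Koszul complexes. You instead peel off a single maximal element $a$ and work with $Q=P\setminus\{a\}$, which turns the induction into exactly the linear-quotients filtration of Theorem~\ref{main}(2): the quotient $\K_P/\K_Q$ is identified, after a sign check, with the full Koszul complex on the regular sequence $\{x_{ij}: j<a_i\}$, hence a resolution of $Sf_a/I_bf_a\cong J_P/J_Q$. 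This makes your argument an explicit iterated mapping cone realizing the subcomplex $\K_P\subseteq\K$, and it avoids both the $|M|=1$/$|M|>1$ case split and Lemma~\ref{interlemma} entirely; in exchange, it requires the direct verification that the restricted sign $\sigma(i,b)=\sum_{j<i}|B_j|+|\{c\in B_i:c<b\}|$ for $b<a_i$ agrees with the Koszul sign on $\bigwedge^\bullet\bigl(\bigoplus_i S^{a_i-1}\bigr)$, which you correctly flag as the main technical point and which does check out. Both proofs are valid; yours is closer in spirit to the mapping-cone picture behind Corollary~\ref{linq}, while the paper's keeps the sign bookkeeping hidden inside the tensor-product Koszul complex at the cost of an extra lemma.
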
 

  Augmenting the complex $\K_P$ with the map 
$$(\K_P)_0\to S$$ 
sending $e_{1a_1} \otimes e_{2a_2} \cdots  \otimes e_{na_n}$ to $f_a=x_{1a_1}\dots x_{na_n}$ one gets a complex $\tilde \K_P$ and we will actually prove it is a resolution of $S/J_P$. We need the following properties that follow immediately from the definitions.    

\begin{remark} 
\label{remobv}\
\begin{itemize}
\item[(1)]  An inclusion $P_1\subseteq  P_2$ of poset ideals  of $D$  induces  an inclusion of the associated  complexes $\tilde \K_{P_1} \subseteq \tilde \K_{P_2}$.  
\item[(2)] Given two poset ideals $Q_1,Q_2$ of $D$   both $Q_1\cup Q_2$ and $Q_1\cap Q_2$ are poset ideals and  one has 
 $\tilde \K_{Q_1} \cap \tilde \K_{Q_2}=\tilde \K_{Q_1\cap Q_2}$ and  $\tilde \K_{Q_1} + \tilde \K_{Q_2}=\tilde \K_{Q_1\cup Q_2}$. 
 \item[(3)] Given two poset ideals $Q_1,Q_2$ of $D$ one has a short exact sequence of complexes 
 $$0\to \tilde \K_{Q_1\cap Q_2}  \to \tilde \K_{Q_1} \oplus \tilde \K_{Q_2} \to  \tilde \K_{Q_1\cup Q_2} \to 0$$
where the first map sends $y$ to $(y,y)$ and the second sends $(y,z)$ to $y-z$. 
\end{itemize} 
 \end{remark}

 Later on we will also need the following assertion that is part of the folklore of the subject. 
 
\begin{lemma}
\label{regseq} 
Let $S$ be a positively graded ring and $M$ a finitely generated graded $S$-module. 
Let $x_1,\dots, x_h$ be elements of degree $1$ of $S$ and set $I=(x_1,\dots,x_h)$. Denote by $\HS(M,z)$ the Hilbert series of $M$.  Assume $\HS(M/IM,z)=\HS(M,z)(1-z)^h$. Then  $x_1,\dots, x_h$ is an $M$-regular sequence. 
\end{lemma}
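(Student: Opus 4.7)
The plan is to induct on $h$, with the base case $h=1$ being the only real content; the general case then follows by a telescoping comparison of Hilbert series.

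First I would establish the following coefficient-wise inequality that underlies everything. For any degree-$1$ element $a\in S$ and any finitely generated graded $S$-module $T$, the four-term exact sequence
$$0\to (0:_T a)(-1)\to T(-1) \xrightarrow{\ a\ } T \to T/aT \to 0$$
yields the identity
$$\HS(T/aT,z)=(1-z)\,\HS(T,z)+z\,\HS(0:_T a,\,z).$$
Since $\HS(0:_T a,z)$ has non-negative coefficients, $\HS(T/aT,z)\geq (1-z)\,\HS(T,z)$ coefficient-wise, with equality if and only if $0:_T a=0$, that is, if and only if $a$ is a non-zerodivisor on $T$. This is exactly the $h=1$ case of the lemma.

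Next I would iterate. Setting $M_0=M$ and $M_i=M/(x_1,\dots,x_i)M$ for $1\leq i\leq h$, applying the inequality successively with $T=M_{i-1}$ and $a=x_i$ gives
$$\HS(M/IM,z)=\HS(M_h,z)\ \geq\ (1-z)\,\HS(M_{h-1},z)\ \geq\ \cdots\ \geq\ (1-z)^h\,\HS(M,z)$$
coefficient-wise. The hypothesis of the lemma asserts equality between the extreme ends of this chain, which forces equality at each intermediate step. By the base case applied to $T=M_{i-1}$ and $a=x_i$, each such equality means that $x_i$ is a non-zerodivisor on $M_{i-1}=M/(x_1,\dots,x_{i-1})M$, which is precisely the assertion that $x_1,\dots,x_h$ is an $M$-regular sequence.

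There is no real obstacle: the only subtle point is that the telescoping works because every inequality in a non-negative coefficient-wise chain must individually be an equality once the endpoints agree, and this is immediate. The proof uses nothing beyond the additivity of Hilbert series on short exact sequences and the positive grading of $S$ (which guarantees that Hilbert series are well-defined formal power series with non-negative coefficients). No appeal to results elsewhere in the paper is needed.
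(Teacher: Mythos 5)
The base case $h=1$ is correct, and the four-term exact sequence identity $\HS(T/aT,z)=(1-z)\HS(T,z)+z\,\HS(0:_Ta,z)$ is exactly the right starting point. But the telescoping step has a genuine gap: the claimed chain of coefficient-wise inequalities
$$\HS(M_h,z)\ \geq\ (1-z)\HS(M_{h-1},z)\ \geq\ \cdots\ \geq\ (1-z)^h\HS(M,z)$$
does not hold in general. Each intermediate inequality, say $(1-z)^{h-i}\HS(M_i,z)\geq (1-z)^{h-i+1}\HS(M_{i-1},z)$, amounts to $(1-z)^{h-i}\,z\,\HS(0:_{M_{i-1}}x_i,z)\geq 0$, and multiplying a non-negative series by a power of $(1-z)$ does not preserve non-negativity. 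Concretely, take $S=K[x,y]$, $M=S/(x,y)^2$, $x_1=x$, $x_2=y$: then $\HS(M,z)=1+2z$, $\HS(M_1,z)=1+z$, $\HS(M_2,z)=1$, and the second link of your chain would require $(1-z)(1+z)\geq (1-z)^2(1+2z)$ coefficient-wise, i.e.\ $(1-z^2)-(1-3z^2+2z^3)=2z^2-2z^3\geq 0$, which fails in degree $3$. So the step ``equality at the ends forces equality at each intermediate step'' rests on a chain that is simply not there.

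The paper avoids this by keeping the equality version of your base-case identity and iterating it to the single exact expression
$$\HS(M_h,z)=(1-z)^h\HS(M,z)+\sum_{i<h}\HS(T_i,z)(1-z)^{h-1-i},$$
where $T_i$ is the (shifted) annihilator of $x_{i+1}$ in $M_i$. The hypothesis then forces $\sum_{i<h}\HS(T_i,z)(1-z)^{h-1-i}=0$, and the paper concludes $T_i=0$ for all $i$ by examining the lowest-degree coefficient of this sum: since each $(1-z)^{h-1-i}$ has constant term $1$, the lowest nonzero coefficient among the $\HS(T_i,z)$ would survive in the sum, a contradiction. This lowest-degree argument is precisely the replacement you need for the invalid ``each link is an inequality'' step. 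Your proof can be repaired by switching from the inequality chain to the exact telescoped identity and then running this degree argument.
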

 
\begin{proof}
For $i=0,1,\dots, h$ we  set $I_i=(x_1,\dots,x_i)$ and $N_i=M/I_iM$. Denote by $T_i$ the kernel of multiplication by 
$x_{i+1}$  on $N_i$. For $i<h$ we have an exact sequence: 
$$0\to T_i\to N_i(-1)\to N_i\to N_{i+1}\to 0$$
and hence 
$$\HS(N_{i+1},z)=\HS(N_{i},z)(1-z)+\HS(T_{i},z)$$
Taking into consideration that $N_0=M$ it follows that for every $j\geq 0$ one has
$$\HS(N_{j},z)=\HS(M,z)(1-z)^j+ \sum_{i<j}\HS(T_{i},z)(1-z)^{j-1-i}.$$
Setting $j=h$ and using the assumption one has: 
$$\sum_{i<h}\HS(T_{i},z)(1-z)^{h-1-i}=0$$
Since $\HS(T_{i},z)$ are series with non-negative terms and the least degree component of $(1-z)^{h-1-i}$ is positive, $
\HS(T_{i},z)=0$ for every $i$, that is $T_i=0$ for every $i$. 
\end{proof}

\begin{theorem}
\label{resgen1} 
The complex $\tilde \K_P$ is a minimal free resolution of $S/J_P$. 
\end{theorem}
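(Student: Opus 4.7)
The plan is to proceed by induction on $|P|$. The base case $P = \emptyset$ is trivial (then $\tilde{\K}_\emptyset$ is just $S$ sitting in position $-1$, which is a resolution of $S = S/J_\emptyset$). For the inductive step, fix a maximal element $a \in P$, set $Q = P \setminus \{a\}$ (still a poset ideal, by maximality of $a$), and let $b = a - (1,\ldots,1)$. Remark \ref{remobv}(1) yields an inclusion $\tilde{\K}_Q \subseteq \tilde{\K}_P$; since the augmentation term $S$ is common to both, the quotient is the nonnegative complex $\K_P/\K_Q$, producing the short exact sequence
$$0 \to \tilde{\K}_Q \to \tilde{\K}_P \to \K_P/\K_Q \to 0.$$

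The crucial step is the identification of $\K_P/\K_Q$. Its basis consists of those $e_A$ with $\max A_i = a_i$ for every $i$, i.e. $A_i = \{a_i\} \cup C_i$ with $C_i \subseteq [b_i]$. Direct inspection of the Koszul differential shows that removing $a_i$ from $A_i$ drops $\max A_i$ below $a_i$ and hence lands in $\K_Q$ (vanishing in the quotient), whereas removing $c \in C_i$ keeps the term in $\K_P/\K_Q$ with the standard Koszul sign $\sum_{j<i}|C_j| + |\{c' \in C_i : c' < c\}|$. After sign bookkeeping this yields an isomorphism of complexes of graded $S$-modules
$$\K_P/\K_Q \cong K(x_{ij}: i \in [n],\ j \leq b_i)(-n),$$
where $K(\cdot)$ denotes the ordinary (non-truncated) Koszul complex. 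Since we are in the generic case these are distinct variables, hence a regular sequence, so the right-hand complex is acyclic in positive degrees with $H_0 = S/I_b(-n)$.

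Feeding the inductive hypothesis ($H_i(\tilde{\K}_Q) = 0$ for $i \geq 0$ and $H_{-1}(\tilde{\K}_Q) = S/J_Q$) into the long exact sequence in homology of the short exact sequence above yields $H_i(\tilde{\K}_P) = 0$ for $i \geq 1$ together with
$$0 \to H_0(\tilde{\K}_P) \to S/I_b(-n) \xrightarrow{\delta} S/J_Q \to H_{-1}(\tilde{\K}_P) \to 0.$$
The connecting map $\delta$ is computed directly: lift $1 \in S/I_b(-n)$ to the generator of $(\K_P/\K_Q)_0$ (the element with $A_i = \{a_i\}$), then apply the augmentation of $\tilde{\K}_P$; the output is $f_a$, so $\delta$ is multiplication by $f_a$. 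In the generic case $f_a = \prod_i x_{ia_i} \notin I_b$, because none of the variables $x_{ia_i}$ appears among the generators of $I_b$. Theorem \ref{main}(2) then gives $J_Q : f_a = I_b$, which is exactly the injectivity of $\delta$. It follows that $H_0(\tilde{\K}_P) = 0$ and $H_{-1}(\tilde{\K}_P) = S/(J_Q + (f_a)) = S/J_P$, closing the induction. Minimality is automatic since every entry of every differential is either $\pm x_{ij}$ or $f_a$, all lying in the graded maximal ideal.

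The main obstacle I anticipate is the explicit identification of $\K_P/\K_Q$ with a genuine Koszul complex on a regular sequence, which requires careful sign tracking and verifying that exactly the right basis elements survive in the quotient. Once this identification is in hand, the remainder is a routine long-exact-sequence argument driven by the algebraic input of Theorem \ref{main}.
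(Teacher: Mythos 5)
Your proof is correct, and it takes a genuinely different route than the paper's. The paper decomposes $P = Q_1 \cup Q_2$ where $Q_2$ is the principal poset ideal generated by one maximal element and $Q_1$ is the union of the others; it then runs the Mayer--Vietoris sequence of Remark~\ref{remobv}(3) and identifies $H_1(\tilde\K_P)$ with the kernel of $S/J_{Q_1\cap Q_2} \to S/J_{Q_1}\oplus S/J_{Q_2}$, which vanishes by Lemma~\ref{interlemma} ($J_{Q_1\cap Q_2}=J_{Q_1}\cap J_{Q_2}$, an elementary LCM computation for monomial ideals). You instead peel off a single maximal element $a$, and the key technical input is the identification of the quotient complex $\K_P/\K_Q$ with the honest Koszul complex on $\{x_{ij}: i\in[n],\ j\le b_i\}$ shifted by $-n$; I verified the sign bookkeeping and it matches exactly, since $\sigma(i,c)=\sum_{j<i}(|A_j|-1)+|\{c'\in A_i:c'<c\}|$ reduces to the standard tensor-product Koszul sign $\sum_{j<i}|C_j|+|\{c'\in C_i:c'<c\}|$ once $a_i$ is fixed in each $A_i$. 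Your connecting-map computation (multiplication by $f_a$) and the use of Theorem~\ref{main}(2) to get injectivity are both valid; note that in the generic monomial setting $J_Q:f_a=I_b$ could also be checked directly without invoking \ref{main}(2), since $J_Q\subseteq I_b$ (every $u\in Q$ satisfies $u\not\ge a$) and $I_b$ is prime with $f_a\notin I_b$. Overall, your argument mirrors the ``linear quotients'' structure of the ideal more transparently (it is literally the homological shadow of the short exact sequence $0\to S/I_b(-n)\to S/J_Q\to S/J_P\to 0$ from the proof of Theorem~\ref{main}), at the cost of the Koszul-quotient identification; the paper's Mayer--Vietoris route avoids that identification but needs Lemma~\ref{interlemma} and a separate base case $|M|=1$.
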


\begin{proof} By construction we have that $H_0(\tilde \K_P)=S/J_P$ and hence we have to show that $H_i(\tilde \K_P)=0$ for $i>0$. We do it by induction on $|P|$. The case $|P|=1$ is obvious. 
Let $M$ be the set of maximal elements in $P$. 

If $|M|=1$, say $M=\{a\}$  with $a=(a_1,\dots, a_n)$, then $P=\{ b\in D : b\leq a\}$ and $J_P=\prod_{i=1}^n  (x_{i1},\dots, x_{ia_i})$. Then a resolution of $S/J_P$ is given by the augmented complex obtained by the tensor product of the truncated Koszul complexes associated to $x_{i1},\dots, x_{ia_i}$ which is exactly $\tilde \K_P$. 

If instead $|M|>1$, say $M=\{m_1,\dots, m_v\}$ set $Q_1=\{ b\in D : b\leq m_i \mbox{ for some } i<v\}$ and $Q_2=\{ b\in D : b\leq m_v\}$ so that $P=Q_1\cup Q_2$.  By \ref{remobv}(3) we have  a short exact   sequence of complexes:  

$$0\to \tilde \K_{Q_1\cap Q_2}  \to \tilde \K_{Q_1} \oplus \tilde \K_{Q_2} \to  \tilde \K_{P} \to 0.$$

  The associated long exact sequence on homology together with the fact that, by induction, we already know the statement for $Q_1, Q_2$ and $Q_1\cap Q_2$, imply that $H_i(\tilde \K_{P})=0$ for $i>1$ and that $H_1(\tilde \K_{P})$ fits in the exact sequence:

$$0\to H_1(\tilde \K_{P}) \to S/J_{Q_1\cap Q_2} \to  S/J_{Q_1} \oplus  S/J_{Q_2} \to S/J_P\to 0$$ 

  But  $J_{Q_1\cap Q_2}=J_{Q_1}  \cap J_{Q_2}$ and $J_P=J_{Q_1}  + J_{Q_2}$ because of Lemma  \ref{interlemma} and then it follows that $H_1(\tilde \K_{P})$   vanishes as well. 
\end{proof} 

 \begin{lemma} 
 \label{interlemma}
Let $P_1,P_2$ be poset ideals of $D$. Then $J_{P_1\cap P_2}=J_{P_1}  \cap J_{P_2}$  and 
$J_{P_1\cup P_2}=J_{P_1} + J_{P_2}$.
\end{lemma}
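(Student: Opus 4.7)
My plan is to exploit the fact that in the generic case under consideration, each $J_P$ is a squarefree monomial ideal in $S = K[x_{ij}]$, minimally generated by the monomials $\{f_a : a \in P\}$ (since different $a$'s give distinct monomials supported on disjoint ``rows'' of variables). Once this is observed, both equalities become statements about monomials.

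The equality $J_{P_1 \cup P_2} = J_{P_1}+J_{P_2}$ is immediate: the set of monomial generators of the left-hand side is $\{f_a : a \in P_1 \cup P_2\}$, which is exactly the union of the generating sets of $J_{P_1}$ and $J_{P_2}$.

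For the intersection $J_{P_1 \cap P_2} = J_{P_1} \cap J_{P_2}$, the inclusion $\subseteq$ follows from $P_1 \cap P_2 \subseteq P_i$. For $\supseteq$, since both sides are monomial ideals, it suffices to show that any monomial $m \in J_{P_1} \cap J_{P_2}$ lies in $J_{P_1 \cap P_2}$. Such an $m$ is divisible by some $f_a$ with $a \in P_1$ and by some $f_b$ with $b \in P_2$. The key move will be to form $c = a \wedge b \in \NN^n$, the componentwise minimum. Then $c \leq a$ and $c \leq b$, so the poset-ideal property of $P_1$ and $P_2$ gives $c \in P_1 \cap P_2$. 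Finally, $f_c \mid m$: for each $i \in [n]$, $c_i$ equals either $a_i$ or $b_i$, and in either case the variable $x_{ic_i}$ divides $m$ (because $f_a$ or $f_b$ does). Hence $m \in (f_c) \subseteq J_{P_1 \cap P_2}$, as required.

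There is no real obstacle here; the only place one has to be careful is invoking that $J_P$ is a monomial ideal whose minimal generators are exactly $\{f_a : a\in P\}$, which is specific to the generic setting of this subsection (for general bases $f$, the monomials $f_a$ are replaced by products of generic linear forms and the $\wedge$ argument breaks down). The componentwise-minimum trick is standard for intersections of monomial ideals, and the poset-ideal hypothesis is exactly what is needed to keep $c$ inside $P_1 \cap P_2$.
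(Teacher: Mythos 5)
Your proof is correct and takes essentially the same approach as the paper: both hinge on the fact that in the generic setting $J_P$ is a squarefree monomial ideal generated by $\{f_a : a \in P\}$, and both reduce the nontrivial inclusion $J_{P_1}\cap J_{P_2}\subseteq J_{P_1\cap P_2}$ to the observation that $f_{a\wedge b}$ divides $\LCM(f_a,f_b)$ together with the poset-ideal property placing $a\wedge b$ in $P_1\cap P_2$. Your cautionary remark that the argument is specific to the generic case is apt; the paper handles the general case separately via Corollary \ref{interlemma2}.
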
 

\begin{proof} 
The second assertion and   the inclusion $J_{P_1\cap P_2}\subseteq J_{P_1}  \cap J_{P_2}$ are obvious. For the other inclusion, since the ideals involved are monomial ideals, the intersection $J_{P_1}  \cap J_{P_2}$ is generated by $\LCM(f_a,f_b)$ with $a\in P_1$ and $b\in P_2$. But $f_{a\wedge b} | \LCM(f_a,f_b)$ and $a\wedge b\in P_1\cap P_2$. 
\end{proof} 

\subsection{Resolution of $J_P$: arbitrary configurations} 

Now let us return to the case of an arbitrary  subspace arrangement $\V=V_1,\dots, V_n$ of dimension $(d_1,\dots, d_n)$ and fix a collection of bases $\{ f_{ij} \}$  satisfying Assumption \ref{assum}.  
Consider the $K$-algebra map: 

$$T=K[x_{ij} : i\in [n] \mbox{ and } j\in [d_i] ]\to S$$
sending $x_{ij}$ to $f_{ij}$ which, without loss of generality, we may assume is surjective. We consider $S$ as a $T$-module via this map.  We have: 

\begin{theorem}
\label{main2}
For every poset ideal $P\subseteq  D_\V$ the complex $\tilde \K_P \otimes_T S$ is a minimal $S$-free resolution of $S/J_P$. 
\end{theorem}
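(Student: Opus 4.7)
My plan is to apply the Hilbert series regular-sequence criterion (Lemma \ref{regseq}) to transfer the generic resolution of Theorem \ref{resgen1} to the arbitrary case. Concretely, write $J_P^{\mathrm{gen}} = (x_a : a \in P) \subseteq T$ for the ideal in the generic polynomial ring; the surjection $\phi: T \to S$ with $x_{ij} \mapsto f_{ij}$ sends $J_P^{\mathrm{gen}}$ onto $J_P$, so $(T/J_P^{\mathrm{gen}}) \otimes_T S = S/J_P$. Assuming as in the setup that $\phi$ is surjective, its kernel $L$ is a $K$-vector space of $h = \dim T - \dim S$ linear forms; let $y_1, \dots, y_h$ be a $K$-basis of $L$ and put $M = T/J_P^{\mathrm{gen}}$, so $M/LM = S/J_P$.

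The key step is to verify that $y_1, \dots, y_h$ is an $M$-regular sequence. By Lemma \ref{regseq} it suffices to check
$$\HS(S/J_P, z) = \HS(T/J_P^{\mathrm{gen}}, z)(1-z)^h.$$
Both the generic arrangement (in $T$) and the given arrangement (in $S$) satisfy Assumption \ref{assum}, so by Corollary \ref{linq} the ideals $J_P^{\mathrm{gen}}$ and $J_P$ admit linear resolutions with identical Betti polynomials $\sum_i \beta_i z^i = \sum_{a \in P}(1+z)^{|a|-n}$. Consequently their Hilbert series share a common numerator $Q(z)$, differing only in the denominators $(1-z)^{\dim T}$ versus $(1-z)^{\dim S}$, and the required identity follows.

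Once this regular-sequence property is established, $\Tor_i^T(S, M)$ vanishes for all $i \geq 1$; since $\tilde\K_P$ is a minimal $T$-free resolution of $M$ by Theorem \ref{resgen1}, the base-changed complex $\tilde\K_P \otimes_T S$ is an $S$-free resolution of $(T/J_P^{\mathrm{gen}}) \otimes_T S = S/J_P$. Minimality is immediate because the differentials of $\tilde\K_P$ have entries of the form $\pm x_{ij}$, which $\phi$ sends to the nonzero linear forms $\pm f_{ij}$ in the graded maximal ideal of $S$.

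The heart of the argument, and the only step that requires genuine new input, is the Hilbert series identity; it rests on the combinatorial invariance of the Betti numbers established through the linear quotients argument of Corollary \ref{linq}. Apart from that, the proof is a standard Tor-vanishing / change-of-rings reduction from the generic to the arbitrary configuration.
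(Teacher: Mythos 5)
Your proof follows essentially the same route as the paper's: base-change the generic resolution $\tilde\K_P$ of $T/J_P^{\mathrm{gen}}$ along $T\to S$, and use Corollary \ref{linq} (identical Betti polynomials in the generic and arbitrary cases, both satisfying Assumption \ref{assum}) together with the Hilbert-series criterion of Lemma \ref{regseq} to show the kernel of $T\to S$ is generated by a $T/J_P^{\mathrm{gen}}$-regular sequence, whence $\Tor_i^T(S,T/J_P^{\mathrm{gen}})=0$ for $i\geq 1$. The paper leaves the Tor-vanishing and minimality checks implicit; you spell them out, but the key idea and the lemmas invoked are the same.
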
 

\begin{proof} In the proof we need to distinguish the ideal $J_P$ associated with the arbitrary subspace arrangement $V_1,\dots, V_n$  and collection of bases $f$ with the one, that we will denote by  $J_P^g$, associated with the generic arrangement of dimension $(d_1,\dots, d_n)$ and collection of bases  $x$. Let $U$ be the kernel of the map $T\to S$.
 By construction, $U$ is generated by $h=\sum_{i=1}^n d_i-\dim_K \sum_{i=1}^n V_i$ linear forms and 
  one has 
  $$T/J_P^g\otimes_T  S=T/(J_P^g+U)=S/J_P.$$
Since by Theorem \ref{resgen1} $\tilde \K_P$ is a resolution of  $T/J_P^g$  it is enough to prove that the generators of $U$ form a $T/J_P^g$-regular sequence. 
Note that by Corollary \ref{linq} $T/J_P^g$ and $S/J_P$ have the same Betti numbers and hence their Hilbert series differ only by the factor $(1-z)^h$. Then by Lemma \ref{regseq} one concludes that the generators of $U$ form a $T/J_P^g$-regular sequence. 
\end{proof}

As a consequence we have that Lemma \ref{interlemma} holds for arbitrary subspace configurations: 

\begin{corollary} \label{interlemma2}
Let $P_1,P_2$ be poset ideals of $D_\V$. Then $J_{P_1\cap P_2}=J_{P_1}  \cap J_{P_2}$  and 
$J_{P_1\cup P_2}=J_{P_1} + J_{P_2}$.
\end{corollary}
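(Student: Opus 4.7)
I would proceed as follows. The identity $J_{P_1 \cup P_2} = J_{P_1} + J_{P_2}$ is immediate from the definition $J_P = (f_a : a \in P)$, and the inclusion $J_{P_1 \cap P_2} \subseteq J_{P_1} \cap J_{P_2}$ is equally clear. The real content is the reverse containment, which I plan to deduce by transferring Lemma~\ref{interlemma} from the generic polynomial ring $T$ to $S$ by means of the resolution of Theorem~\ref{main2}.

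Since $D_\V$ is itself a poset ideal of $D$ (the defining inequalities are monotone in each coordinate), both $P_1 \cap P_2$ and $P_1 \cup P_2$ are again poset ideals of $D_\V$, so Theorem~\ref{main2} applies to each of them. I would then start from the short exact sequence of $T$-complexes
$$0 \to \tilde\K_{P_1 \cap P_2} \to \tilde\K_{P_1} \oplus \tilde\K_{P_2} \to \tilde\K_{P_1 \cup P_2} \to 0$$
supplied by Remark~\ref{remobv}(3). In each homological degree this reduces to a short exact sequence of finitely generated free $T$-modules, and hence splits (the right-hand term being projective). Applying $-\otimes_T S$ therefore preserves exactness, giving a short exact sequence of $S$-complexes, each of which is a resolution of $S/J_{P_i}$ for the corresponding ideal by Theorem~\ref{main2}.

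The associated long exact sequence in homology collapses, since each complex is acyclic in positive degrees, to the Mayer--Vietoris sequence
$$0 \to S/J_{P_1 \cap P_2} \to S/J_{P_1} \oplus S/J_{P_2} \to S/J_{P_1 \cup P_2} \to 0$$
with the standard diagonal and subtraction maps. Exactness in the middle identifies $J_{P_1} \cap J_{P_2}$ with $J_{P_1 \cap P_2}$, finishing the proof.

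The hard part, though quite mild, will be recording the splitness argument cleanly: one can observe that the short exact sequence of complexes decomposes degreewise as a direct sum, indexed by basis labels $e_A$, of obvious split-exact sequences determined by whether $(\max A_1, \ldots, \max A_n)$ lies in $P_1 \cap P_2$, in $P_1 \setminus P_2$, or in $P_2 \setminus P_1$. With this verified, the tensoring step and the homology computation are entirely formal, and no separate monomial-style argument of the kind used for Lemma~\ref{interlemma} is needed.
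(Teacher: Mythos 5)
Your proof is correct and follows essentially the same route as the paper: tensor the short exact sequence of complexes from Remark~\ref{remobv}(3) with $S$, invoke Theorem~\ref{main2} to identify the homologies, and read off the identity from the resulting Mayer--Vietoris sequence. The only difference is that you explicitly justify exactness after tensoring via the degreewise splitting (the terms being free $T$-modules), a point the paper leaves implicit.
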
 
\begin{proof} 
The second assertion and the inclusion $J_{P_1\cap P_2}\subseteq J_{P_1}  \cap J_{P_2}$ are obvious. The short exact sequence of complexes 
$$0\to \tilde \K_{P_1\cap P_2}\otimes S   \to  ( \tilde \K_{P_1} \otimes S ) \oplus ( \tilde \K_{P_2} \otimes S ) \to  \tilde \K_{P_1\cup P_2} \otimes S \to 0$$ 
induces an exact sequence in homology that, by virtue of Theorem \ref{main2}, yields the following short exact sequence: 
$$0\to S/J_{P_1\cap P_2} \to S/J_{P_1} \oplus S/J_{P_2} \to  S/J_{P_1}+J_{P_2} \to 0$$
that in turns implies the desired equality. 
\end{proof}

As a special case of Theorem \ref{main2} we have: 

\begin{theorem}
\label{main3}
For every subspace arrangement $\V=V_1,\dots, V_n$  the complex $\tilde \K_{D_\V}  \otimes_T S$ is a minimal $S$-free resolution of $S/J$. 
\end{theorem}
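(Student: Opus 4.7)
The plan is to deduce Theorem \ref{main3} directly from Theorem \ref{main2} by specializing the poset ideal to $D_\V$ itself. First I would verify that $D_\V$ is a poset ideal of $D$. By definition,
$$D_\V = \left\{ a \in D : \sum_{i\in T} a_i - |T| \leq \rk_\V(T) - 1 \text{ for every } \emptyset \neq T \subseteq [n] \right\},$$
and these defining inequalities are monotone with respect to the coordinatewise partial order: if $a \in D_\V$ and $b \in D$ satisfies $b \leq a$, then for every nonempty $T \subseteq [n]$ we have $\sum_{i\in T} b_i \leq \sum_{i\in T} a_i \leq \rk_\V(T) - 1 + |T|$, so $b \in D_\V$.

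Having confirmed that $D_\V$ is a poset ideal of $D$ (and that, trivially, $D_\V \subseteq D_\V$), I would apply Theorem \ref{main2} with $P = D_\V$ to conclude that $\tilde \K_{D_\V} \otimes_T S$ is a minimal $S$-free resolution of $S/J_{D_\V}$. To finish, I would invoke Corollary \ref{essential}, which asserts exactly that $J = J_{D_\V}$, so that $S/J_{D_\V} = S/J$ and the resolution produced by Theorem \ref{main2} is a resolution of $S/J$.

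Since the substantive content is already contained in Theorem \ref{main2}, there is no real obstacle to overcome here: the present statement is genuinely a corollary. The heavy lifting was done earlier --- the passage from the generic case (Theorem \ref{resgen1}) to arbitrary configurations via the regular sequence argument based on Lemma \ref{regseq}, together with the identification $J = J_{D_\V}$ provided by Corollary \ref{essential}.
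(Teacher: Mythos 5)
Your proposal is correct and follows exactly the route the paper intends: Theorem \ref{main3} is stated as a special case of Theorem \ref{main2} with $P = D_\V$, using Corollary \ref{essential} for the identification $J = J_{D_\V}$. Your extra verification that $D_\V$ is a poset ideal (via the monotonicity of the defining inequalities) is a useful sanity check that the paper leaves implicit.
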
 

\begin{remark}
The formulas for the Betti numbers and projective dimension hold over any base field. 
The resolution described works provided  the base field is infinite. 
\end{remark} 

\medskip


\end{document}